\theoremstyle{plain}
\newtheorem{theorem}{Theorem}[section]
\newtheorem{lemma}[theorem]{Lemma}
\newtheorem{example}[theorem]{Example}
\newtheorem{corollary}[theorem]{Corollary}
\newtheorem{question}[theorem]{Question}
\newcommand{\N}{\mathbb{N}}
\theoremstyle{remark}
\newtheorem*{remark}{Remark}
\newcommand{\Z}{\mathbb{Z}}
\begin{document}

\title{Generalized U-factorization in Commutative Rings with Zero-Divisors}
        \date{\today}

\author{Christopher Park Mooney}
\address{Reinhart Center \\ Viterbo University \\ 900 Viterbo Drive \\ La Crosse, WI 54601}
\email{cpmooney@viterbo.edu}


\keywords{factorization, zero-divisors, commutative rings}

\begin{abstract}
Recently substantial progress has been made on generalized factorization techniques in integral domains, in particular $\tau$-factorization.  There has also been advances made in investigating factorization in commutative rings with zero-divisors.  One approach which has been found to be very successful is that of U-factorization introduced by C.R. Fletcher.  We seek to synthesize work done in these two areas by generalizing $\tau$-factorization to rings with zero-divisors by using the notion of U-factorization.  
\\
\vspace{.1in}\noindent \textbf{2010 AMS Subject Classification:} 13A05, 13E99, 13F15
\end{abstract}
\maketitle
\section{Introduction}  
Much work has been done on generalized factorization techniques in integral domains.  There is an excellent overview in \cite{Frazier}, where particular attention is paid to $\tau$-factorization.  Several authors have investigated ways to extend factorization to commutative rings with zero-divisors.  For instance, D.D. Anderson, Valdez-Leon, A\v{g}arg\"{u}n, Chun \cite{Chun, Valdezleon, Agargun}.  One particular method was that of U-factorization introduced by C.R. Fletcher in \cite{Fletcher} and \cite{Fletcher2}.  This method of factorization has been studied extensively by Michael Axtell and others in \cite{Axtell, Axtell2, Valdezleon3}.  We synthesize this work done into a single study of what we will call $\tau$-U-factorization. 
\\
\indent In this paper, we will assume $R$ is a commutative ring with $1$.   Let $R^*=R-\{0\}$, let $U(R)$ be the set of units of $R$, and let $R^{\#}=R^*-U(R)$ be the non-zero, non-units of $R$.  As in \cite{Axtell2}, we define U-factorization as follows.  Let $a\in R$ be a non-unit.  If $a=\lambda a_1\cdots a_n b_1\cdots b_m$ is a factorization with $\lambda \in U(R)$, $a_i,b_i \in R^{\#}$, then we will call $a=\lambda a_1 a_2 \cdots a_n \left\lceil b_1 b_2\cdots b_m \right\rceil$ a $\emph{U-factorization}$ of $a$ if (1) $\ a_i(b_1 \cdots b_m)=(b_1 \cdots b_m)$ for all $1\leq i \leq n$ and (2) $\ b_j(b_1 \cdots \widehat{b_j} \cdots b_m) \neq (b_1 \cdots \widehat{b_j} \cdots b_m)$ for $1 \leq j \leq m$ where $\widehat{b_j}$ means $b_j$ is omitted from the product.  Here $(b_1 \cdots b_m)$ is the principal ideal generated by $b_1 \cdots b_m$.  The $b_i$'s in this particular U-factorization above will be referred to as \emph{essential divisors}. The $a_i$'s in this particular U-factorization above will be referred to as \emph{inessential divisors}.  A U-factorization is said to be \emph{trivial} if there is only one essential divisor.
\\
\indent Note: we have added a single unit factor in front with the inessential divisors which was not in M. Axtell's original paper.  This is added for consistency with the $\tau$-factorization definitions and it is evident that a unit is always inessential.  We allow only one unit factor, so it will not affect any of the finite factorization properties.   

\begin{remark} If $a=\lambda a_1 \cdots a_n \left\lceil b_1 \cdots b_m\right\rceil$ is a U-factorization, then for any $1\leq i_0\leq m$, we have $(a)=(b_1 \cdots b_m) \subsetneq (b_1 \cdots \widehat{b_{i_0}} \cdots b_m)$.  This is immediate from the definition of U-factorization.
\end{remark}
\indent  In \cite{Axtell}, M. Axtell defines a non-unit $a$ and $b$ to be associate if $(a)=(b)$ and a non-zero non-unit $a$ said to be irreducible if $a=bc$ implies $a$ is associate to $b$ or $c$.  $R$ is  commutative ring $R$ to be \emph{U-atomic} if every non-zero non-unit has a U-factorization in which every essential divisor is irreducible.  $R$ is said to be a \emph{U-finite factorization ring} if every non-zero non-unit has a finite number of distinct U-factorizations.  $R$ is said to be a \emph{U-bounded factorization ring} if every non-zero non-unit has a bound on the number of essential divisors in any U-factorization.  $R$ is said to be a \emph{U-weak finite factorization ring} if every non-zero non-unit has a finite number of non-associate essential divisors.  $R$ is said to be a \emph{U-atomic idf-ring} if every non-zero non-unit has a finite number of non-associate irreducible essential divisors.  $R$ is said to be a \emph{U-half factorization ring} if $R$ is U-atomic and every U-atomic factorization has the same number of irreducible essential divisors.  $R$ is said to be a \emph{U-unique factorization ring} if it is a U-HFR and in addition each U-atomic factorization can be arranged so the essential divisors correspond up to associate.  In \cite[Theorem 2.1]{Axtell2}, it is shown this definition of U-UFR is equivalent to the one given by C.R. Fletcher in \cite{Fletcher, Fletcher2}.
\\
\indent In the second section, we begin with some preliminary definitions and results about $\tau$-factorization in integral domains as well as factorization in rings with zero-divisors.  In the third section, we state definitions for $\tau$-U-irreducible elements and $\tau$-U-finite factorization properties.  We also prove some preliminary results using these new definitions.  In the fourth section, we demonstrate the relationship between rings satisfying the various $\tau$-U finite factorization properties.  Furthermore, we compare these properties with the rings satisfying $\tau$-finite factorization properties studied in \cite{Mooney}.  In the final section, we investigate direct products of rings.  We introduce a relation $\tau_\times$ which carries many $\tau$-U-finite factorization properties of the component rings through the direct product.  

\section{Preliminary Definitions and Results} 
\indent As in \cite{Valdezleon}, we let $a \sim b$ if $(a)=(b)$, $a\approx b$ if there exists $\lambda \in U(R)$ such that $a=\lambda b$, and $a\cong b$ if (1) $a\sim b$ and (2) $a=b=0$ or if $a=rb$ for some $r\in R$ then $r\in U(R)$.  We say $a$ and $b$ are \emph{associates} (resp. \emph{strong associates, very strong associates}) if $a\sim b$ (resp. $a\approx b$, $a \cong b$).  As in \cite{Stickles}, a ring $R$ is said to be \emph{strongly associate} (resp. \emph{very strongly associate}) ring if for any $a,b \in R$, $a\sim b$ implies $a \approx b$ (resp. $a \cong b$).
\\
\indent Let $\tau$ be a relation on $R^{\#}$, that is, $\tau \subseteq R^{\#} \times R^{\#}$.  We will always assume further that $\tau$ is symmetric.  Let $a$ be a non-unit, $a_i \in R^{\#}$ and $\lambda \in U(R)$, then $a=\lambda a_1 \cdots a_n$ is said to be a \emph{$\tau$-factorization} if $a_i \tau a_j$ for all $i\neq j$.  If $n=1$, then this is said to be a \emph{trivial $\tau$-factorization}.  Each $a_i$ is said to be a $\tau$-factor, or that $a_i$ $\tau$-divides $a$, written $a_i \mid_\tau a$.
\\
\indent We say that $\tau$ is \emph{multiplicative} (resp. \emph{divisive}) if for $a,b,c \in R^{\#}$ (resp. $a,b,b' \in R^{\#}$), $a\tau b$ and $a\tau c$ imply $a\tau bc$ (resp. $a\tau b$ and $b'\mid b$ imply $a \tau b'$).  We say $\tau$ is \emph{associate} (resp. \emph{strongly associate}, \emph{very strongly associate) preserving} if for $a,b,b'\in R^{\#}$ with $b\sim b'$ (resp. $b\approx b'$, $b\cong b'$) $a\tau b$ implies $a\tau b'$.  We define a \emph{$\tau$-refinement} of a $\tau$-factorization $\lambda a_1 \cdots a_n$ to be a factorization of the form 
$$(\lambda \lambda_1 \cdots \lambda_n) \cdot b_{11}\cdots b_{1m_1}\cdot b_{21}\cdots b_{2m_2} \cdots b_{n1} \cdots b_{nm_n}$$
where $a_i=\lambda_i b_{i1}\cdots b_{im_i}$ is a $\tau$-factorization for each $i$.  This is slightly different from the original definition in \cite{Frazier} where no unit factor was allowed, and one can see they are equivalent when $\tau$ is associate preserving.  We then say that $\tau$ is \emph{refinable} if every $\tau$-refinement of a $\tau$-factorization is a $\tau$-factorization.  We say $\tau$ is \emph{combinable} if whenever $\lambda a_1 \cdots a_n$ is a $\tau$-factorization, then so is each $\lambda a_1 \cdots a_{i-1}(a_ia_{i+1})a_{i+2}\cdots a_n$.  
\\
\indent We now summarize several of the definitions given in \cite{Mooney}.  Let $a\in R$ be a non-unit.  Then $a$ is said to be \emph{$\tau$-irreducible} or \emph{$\tau$-atomic} if for any $\tau$-factorization $a=\lambda a_1 \cdots a_n$, we have $a\sim a_i$ for some $i$.  We will say $a$ is \emph{$\tau$-strongly irreducible} or \emph{$\tau$-strongly atomic} if for any $\tau$-factorization $a=\lambda a_1 \cdots a_n$, we have $a \approx a_i$ for some $a_i$.  We will say that $a$ is \emph{$\tau$-m-irreducible} or \emph{$\tau$-m-atomic} if for any $\tau$-factorization $a=\lambda a_1 \cdots a_n$, we have $a \sim a_i$ for all $i$.  Note: the $m$ is for ``maximal" since such an $a$ is maximal among principal ideals generated by elements which occur as $\tau$-factors of $a$.  We will say that $a$ is \emph{$\tau$-very strongly irreducible} or \emph{$\tau$-very strongly atomic} if $a\cong a$ and $a$ has no non-trivial $\tau$-factorizations.  See \cite{Mooney} for more equivalent definitions of these various forms of $\tau$-irreducibility.
\\
\indent From \cite[Theorem 3.9]{Mooney}, we have the following relations where $\dagger$ represents the implication requires a strongly associate ring:

$$\xymatrix{
\tau\text{-very strongly irred.}\ar@{=>}[dr] \ar@{=>}[r]& \tau\text{-strongly irred.} \ar@{=>}[r]& \tau \text{-irred.}\\
 & \tau\text{-m-irred.}\ar@{=>}[u]_{\dagger}\ar@{=>}[ur]  & &}$$

\section{$\tau$-U-irreducible elements}
\indent  A \emph{$\tau$-U-factorization} of a non-unit $a\in R$ is a U-factorization  $a=\lambda a_1 a_2 \cdots a_n \left\lceil b_1 b_2\cdots b_m \right\rceil$ for which $\lambda a_1 \cdots a_n b_1 \cdots b_m$ is also a $\tau$-factorization.  
\\
\indent Given a symmetric relation $\tau$ on $R^{\#}$, we say $R$ is \emph{$\tau$-U-refinable} if for every $\tau$-U-factorization of any non-unit $a\in U(R)$, $a=\lambda a_1 \cdots a_n \left\lceil b_1 \cdots b_m\right\rceil$, any $\tau$-U-factorization of an essential divisors, $b_i=\lambda' c_1 \cdots c_{n'} \left\lceil d_1 \cdots d_{m'}\right\rceil$ satisfies 
$$a=\lambda \lambda' a_1 \cdots a_n c_1 \cdots c_{n'}\left\lceil b_1 \cdots b_{i-1} d_1 \cdots d_{m'} b_{i+1} \cdots \ b_m\right\rceil$$ is a $\tau$-U-factorization.  

\begin{example} Let $R=\Z/20\Z$, and let $\tau= R^{\#}\times R^{\#}$.  \end{example}
Certainly $0=\left\lceil 10\cdot 10\right\rceil$ is a $\tau$-U-factorization.  But $10=\left\lceil 2 \cdot 5\right\rceil$ is a $\tau$-U-factorization; however, $0=\left \lceil 2 \cdot 5 \cdot 2 \cdot 6 \right \rceil$ is not a U-factorization since $5$ becomes inessential after a $\tau$-U-refinement.  It will sometimes be important to ensure the essential divisors of a $\tau$-U-refinement of a $\tau$-U-factorization's essential divisors remain essential.  We will see that in a pr\'esimplifiable ring, there are no inessential divisors, so for $\tau$-refinable, $R$ will be $\tau$-U-refinable.
\\
\indent As stated in \cite{Axtell}, the primary benefit of looking at U-factorizations is the elimination of troublesome idempotent elements that ruin many of the finite factorization properties.  For instance, even $\Z_6$ is not a BFR (a ring in which every non-unit has a bound on the number of non-unit factors in any factorization) because we have $3=3^2$.  Thus, $3$ is an idempotent, so $3=3^n$ for all $n\geq 1$ which yields arbitrarily long factorizations.  When we use U-factorization, we see any of these factorizations can be rearranged to $3=3^{n-1}\left\lceil 3\right\rceil$, which has only one essential divisor.   
\\
\indent Let $\alpha \in \{$irreducible, strongly irreducible, m-irreducible, very strongly irreducible$\}$.  Let $a$ be a non-unit.  If $a=\lambda a_1 a_2 \cdots a_n \left\lceil b_1 b_2\cdots b_m \right\rceil$ is a $\tau$-U-factorization, then this factorization is said to be a \emph{$\tau$-U-$\alpha$-factorization} if it is a $\tau$-U-factorization and the essential divisors $b_i$ are $\tau$-$\alpha$ for $1 \leq i \leq m$.  
\\
\indent One must be somewhat more careful with U-factorizations as there is a loss of uniqueness in the factorizations.  For instance, if we let $R=\Z_6 \times \Z_8$, then we can factor $(3,4)$ as $(3,1) \left\lceil (3,3)(1,4)\right\rceil$ or $(3,3) \left\lceil (3,1)(1,4)\right\rceil$.  On the bright side, we have \cite[Proposition 4.1]{Valdezleon3}.

\begin{theorem} \label{thm: rearrange} Every factorization can be rearranged into a U-factorization.
\end{theorem}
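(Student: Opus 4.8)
The plan is to change nothing about the factors or the unit of the given factorization and only to decide which of the factors become essential divisors. So I would start with a factorization $a = \lambda a_1 \cdots a_k$, $\lambda \in U(R)$, $a_i \in R^{\#}$, and note that $(a) = (a_1 \cdots a_k)$ since $\lambda$ is a unit. For an index set $T \subseteq \{1,\dots,k\}$ write $a_T = \prod_{i\in T} a_i$, with $a_\emptyset = 1$. The key move is to choose $T$ of \emph{minimal cardinality} among all index sets satisfying $(a_T) = (a)$; such $T$ exist because $T = \{1,\dots,k\}$ qualifies. I would then declare the factors indexed by $T$ to be the essential divisors $b_1,\dots,b_m$ and the remaining factors to be the inessential divisors $a_{j_1},\dots,a_{j_n}$, and claim that $a = \lambda a_{j_1}\cdots a_{j_n}\left\lceil b_1\cdots b_m\right\rceil$ is a U-factorization.

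Three things then need checking. First, $m \ge 1$: since $a$ is a non-unit, $(a)\neq(1)=(a_\emptyset)$, so $\emptyset$ is not a candidate and $T\neq\emptyset$; this guarantees at least one essential divisor, as a U-factorization requires. Second, condition (1): for an inessential index $j$, I would sandwich $(a) = (a_1\cdots a_k)\subseteq (a_{T\cup\{j\}})\subseteq (a_T) = (a)$ --- the left inclusion because $a_{T\cup\{j\}}$ divides the full product, the right because $a_T$ divides $a_{T\cup\{j\}}$ --- to conclude $(a_j a_T) = (a_T)$, which is exactly $a_j(b_1\cdots b_m) = (b_1\cdots b_m)$. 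Third, condition (2): for an essential index $i\in T$, one has $(a_T)\subseteq (a_{T\setminus\{i\}})$ because $a_{T\setminus\{i\}}$ divides $a_T$, and equality would put the smaller set $T\setminus\{i\}$ among the candidates, contradicting minimality of $T$; hence $b(b_1\cdots\widehat{b}\cdots b_m)\neq(b_1\cdots\widehat{b}\cdots b_m)$ for each essential divisor $b$.

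The argument is elementary once the right quantity to minimize is identified. The only points needing care are bookkeeping with repeated factors --- so one works with index sets (equivalently sub-multisets), not with the underlying set of distinct factors --- and remembering that $x\mid y$ forces $(y)\subseteq(x)$, which is the single fact driving both the ``no shrinking'' direction of (1) and the minimality argument for (2). I do not expect a genuine obstacle here; the subtlest point is merely excluding the empty selection, so that the rearranged factorization genuinely has an essential divisor.
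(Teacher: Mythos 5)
Your proof is correct, and all three verifications (nonemptiness of the essential part, condition (1) via the sandwich $(a)\subseteq(a_{T\cup\{j\}})\subseteq(a_T)=(a)$, condition (2) via minimality) are exactly what is needed; working with index sets rather than the set of distinct factors correctly handles repeated factors. One point of comparison: the paper itself does not prove this statement at all --- it quotes it as Proposition 4.1 of A\u{g}arg\"{u}n--Anderson--Valdes-Leon, \emph{Factorization in commutative rings with zero divisors, III}. The proof in that literature (and in Axtell's related lemmas) is usually phrased iteratively: start with all factors declared essential, and whenever some factor can be deleted without changing the principal ideal, move it to the inessential side; the process terminates because there are finitely many factors, and the ideal generated by the essential part stays equal to $(a)$ throughout, which yields condition (1) by the same divisibility sandwich you use. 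Your one-shot choice of a subset $T$ of minimal cardinality with $(a_T)=(a)$ is essentially the terminal state of that iteration, packaged as a single extremal choice; it is marginally cleaner since it avoids having to argue termination and that earlier moves remain valid after later ones. (Note also that minimality is only needed for condition (2); condition (1) holds for any $T$ with $(a_T)=(a)$, as your argument in fact shows.)
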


\begin{corollary}\label{cor: rearrange}Let $R$ be a commutative ring with $1$ and $\tau$ a symmetric relation on $R^{\#}$.  Let $\alpha \in \{$irreducible, strongly irreducible, m-irreducible, very strongly irreducible$\}$.  For every $\tau$-$\alpha$ factorization of a non-unit $a\in R$, $a=\lambda a_1 \cdots a_n$, we can rearrange this factorization into a $\tau$-U-$\alpha$-factorization.
\end{corollary}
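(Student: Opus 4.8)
The plan is to reduce the statement directly to Theorem~\ref{thm: rearrange}. Given a $\tau$-$\alpha$ factorization $a=\lambda a_1\cdots a_n$, I would first forget the $\tau$- and $\alpha$-data and apply Theorem~\ref{thm: rearrange} to the underlying ordinary factorization to obtain a U-factorization. The key observation making everything work is that the rearrangement furnished by that theorem merely reorders the given factors and inserts the ceiling brackets; it introduces no new factors and merges none of the existing ones. Hence the resulting U-factorization has the shape
$$a=\lambda\, a_{i_1}\cdots a_{i_k}\left\lceil a_{j_1}\cdots a_{j_m}\right\rceil,$$
where $\{i_1,\dots,i_k\}\cup\{j_1,\dots,j_m\}$ is a partition of $\{1,\dots,n\}$, and (using the convention already adopted that a unit is always inessential) the single unit $\lambda$ is carried along among the inessential divisors.

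Next I would check that this rearrangement is still a $\tau$-factorization, i.e.\ that the product $\lambda\, a_{i_1}\cdots a_{i_k} a_{j_1}\cdots a_{j_m}$ satisfies the pairwise condition. But this product is just a permutation of $\lambda a_1\cdots a_n$, and the requirement ``$x\,\tau\,y$ for every pair of distinct non-unit factors $x,y$'' depends only on the unordered multiset of factors, since $\tau$ is assumed symmetric. As $a=\lambda a_1\cdots a_n$ was a $\tau$-factorization, so is any reordering of it; thus the displayed U-factorization is a $\tau$-U-factorization in the sense of Section~3.

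Finally I would verify the $\alpha$-condition on the essential divisors: each $a_{j_\ell}$ is one of the original factors $a_i$, and by hypothesis every $a_i$ is $\tau$-$\alpha$. Therefore all essential divisors of the rearranged factorization are $\tau$-$\alpha$, and by definition the factorization is a $\tau$-U-$\alpha$-factorization, as desired.

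I do not anticipate a serious obstacle here; the only point requiring care is the first one, namely making explicit that the rearrangement in Theorem~\ref{thm: rearrange} is a genuine reordering of the same collection of factors, so that neither the pairwise $\tau$-relation nor the $\tau$-$\alpha$ property of any factor can be destroyed. (Had the rearrangement been allowed to combine factors, the $\tau$-factorization property could fail unless $\tau$ were multiplicative, and the $\alpha$-property of a combined inessential factor would be unclear; so it is worth stating the ``reordering only'' feature explicitly when invoking it.)
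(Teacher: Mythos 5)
Your proposal is correct and follows exactly the paper's own argument: invoke Theorem \ref{thm: rearrange} to rearrange into a U-factorization, note that the pairwise $\tau$-condition survives reordering since $\tau$ is symmetric, and observe that the essential divisors are among the original $\tau$-$\alpha$ factors. Your extra remark that the rearrangement only reorders and never merges factors is a useful clarification but not a departure from the paper's proof.
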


\begin{proof} Let $a=\lambda a_1 \cdots a_n$ be a $\tau$-$\alpha$-factorization.  By Theorem \ref{thm: rearrange} we can rearrange this to form a U-factorization.  This remains a $\tau$-factorization since $\tau$ is assumed to be symmetric.  Lastly each $a_i$ is $\tau$-$\alpha$, so the essential divisors are $\tau$-$\alpha$.
\end{proof}
\indent This leads us to another equivalent definition of $\tau$-irreducible. 
\begin{theorem} Let $a\in R$ be a non-unit.  Then $a$ is $\tau$-irreducible if and only if any $\tau$-U-factorization of $a$ has only one essential divisor.
\end{theorem}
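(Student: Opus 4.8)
The plan is to prove both directions, with all the real work in the ``only if.''

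For the ``if'' direction: assume every $\tau$-U-factorization of $a$ has exactly one essential divisor, and let $a=\lambda a_1\cdots a_n$ be an arbitrary $\tau$-factorization. By Theorem~\ref{thm: rearrange} it rearranges into a U-factorization, which remains a $\tau$-factorization because $\tau$ is symmetric (the argument used in the proof of Corollary~\ref{cor: rearrange}), hence is a $\tau$-U-factorization. By hypothesis its unique essential divisor is one of the $a_i$, and the Remark gives $(a)=(a_i)$; so $a\sim a_i$ and $a$ is $\tau$-irreducible.

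For the ``only if'' direction: let $a$ be $\tau$-irreducible and $a=\lambda a_1\cdots a_n\lceil b_1\cdots b_m\rceil$ a $\tau$-U-factorization; we must show $m=1$. That $m\ge 1$ for a U-factorization of a non-unit is routine from condition~(1), and if $a=0$ there is nothing to prove, since the associated $\tau$-factorization would then need a factor associate to $0$, impossible as the factors lie in $R^{\#}$. Now $a=\lambda a_1\cdots a_n b_1\cdots b_m$ is a $\tau$-factorization, so $\tau$-irreducibility gives a factor $c$ with $(c)=(a)$; also $(a)=(b_1\cdots b_m)$ by the Remark. If $c=b_{j_0}$ is an essential divisor and $m\ge 2$, pick $j_1\ne j_0$: since $b_{j_0}$ still occurs in $b_1\cdots\widehat{b_{j_1}}\cdots b_m$ we get $(b_1\cdots\widehat{b_{j_1}}\cdots b_m)\subseteq(b_{j_0})=(a)=(b_1\cdots b_m)$, contradicting the strict containment $(b_1\cdots b_m)\subsetneq(b_1\cdots\widehat{b_{j_1}}\cdots b_m)$ from the Remark. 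Hence $m=1$ in this case.

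The remaining case, $c=a_{i_0}$ an inessential divisor, is the crux and the main obstacle: since $R$ is not assumed strongly associate, $(a)=(a_{i_0})$ does not by itself let us replace $a_{i_0}$ by a unit. Here I would combine condition~(1), $a_{i_0}(b_1\cdots b_m)=(b_1\cdots b_m)$, with the equalities $(a_{i_0})=(b_1\cdots b_m)=(a)$ to deduce $(a^2)=(a)$; writing $a=ca^2$, the element $e=ca$ is then idempotent with $(e)=(a)$. Passing to the decomposition $R\cong eR\times(1-e)R$ (both factors nonzero, as $a$ is a nonzero non-unit), the ideal $(a)=(e)$ forces both $a$ and $b_1\cdots b_m$ to lie in $eR\times\{0\}$ with unit first coordinate, so they differ by a unit: $a=\mu\, b_1\cdots b_m$ with $\mu\in U(R)$. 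As the $b_j$ are pairwise $\tau$-related (inherited from the original $\tau$-U-factorization), this is a $\tau$-factorization, so $\tau$-irreducibility yields $b_j\sim a$ for some essential divisor $b_j$, returning us to the previous case and forcing $m=1$. The rest is routine manipulation of principal ideals.
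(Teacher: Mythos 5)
Your proof is correct, and in the one case that carries all the difficulty it takes a genuinely different route from the paper. The reverse direction and the case where the factor associate to $a$ is an \emph{essential} divisor coincide with the paper's argument (rearrange via Theorem \ref{thm: rearrange}, then contradict the Remark's strict containment). The divergence is the case $a\sim a_{i_0}$ with $a_{i_0}$ inessential: the paper disposes of it with the chain $(a)=(a_1\cdots a_n b_1\cdots b_m)\subsetneq(a_1\cdots a_n\widehat{b_1}b_2\cdots b_m)\subseteq(a_{i_0})=(a)$, but that strict inclusion does not follow from the Remark, since multiplying $(b_1\cdots b_m)\subsetneq(b_2\cdots b_m)$ by the inessential divisors need not preserve strictness; in fact, under the hypothesis $(a)=(a_{i_0})$ the whole chain collapses to equalities (take $R=\Z/2\Z\times\Z/4\Z$, $a=a_{i_0}=(1,0)$, $b_1=b_2=(1,2)$: then $(a_{i_0}b_1b_2)=(a_{i_0}b_2)$), so the paper's displayed contradiction is not actually obtained there. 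Your idempotent argument supplies exactly the missing step: from $(a^2)=(a)$ you produce an idempotent $e$ with $(e)=(a)$, and the decomposition $R\cong eR\times(1-e)R$ upgrades $a\sim b_1\cdots b_m$ to $a=\mu b_1\cdots b_m$ with $\mu\in U(R)$; since the $b_j$ are pairwise $\tau$-related this is a bona fide $\tau$-factorization, so $\tau$-irreducibility hands you $a\sim b_j$ for an essential divisor, and the already-settled essential case forces $m=1$. So your route costs some Peirce-decomposition bookkeeping but buys a complete argument that needs no strongly associate hypothesis and closes the inessential case that the paper's chain leaves unjustified. Two trivial quibbles: you reuse the letter $c$ both for the distinguished factor and for the element with $a=ca^2$, and for $a=0$ it would be cleaner to say outright that a $\tau$-irreducible $0$ admits no $\tau$-U-factorization at all, so the claim is vacuous there.
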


\begin{proof} ($\Rightarrow$) Let $a$ be $\tau$-irreducible.  Let $a=\lambda a_1 \cdots a_n \left\lceil b_1 \cdots b_m\right\rceil$ be a $\tau$-U-factorization.  Suppose $m\geq 2$, then $a = \lambda a_1 \cdots a_n b_1 \cdots b_m$ is a $\tau$-factorization implies $a \sim a_{i_0}$ for some $1 \leq i_0 \leq n$ or $a \sim b_{i_0}$ for some $1\leq i_0 \leq m$.  But then either

$$(a)=(a_1 \cdots a_n b_1 \cdots b_m)\subsetneq (a_1 \cdots a_n \widehat{b_1}b_2\cdots b_{m}) \subseteq (a_{i_0}) = (a)$$ 
or
$$(a)=(a_1 \cdots a_n b_1 \cdots b_m)=(b_1 \cdots b_m)\subsetneq (\widehat{b_1} \cdots \widehat{b_{i_0 - 1}}\cdot b_{i_0}\cdot \widehat{b_{i_0 + 1}} \cdots \widehat{b_{m}}) \subseteq (b_{i_0}) = (a)$$
a contradiction.
\\
\indent ($\Leftarrow$) Suppose $a=\lambda a_1 \cdots a_n$.  Then this can be rearranged into a U-factorization, and hence a $\tau$-U-factorization.  By hypothesis, there can only be one essential divisor.  Suppose it is $a_n$.  We have $a=\lambda a_1 \cdots a_{n-1} \left\lceil a_n\right\rceil$ is a $\tau$-U-factorization and $a \sim a_n$ as desired.
\end{proof}
\indent We now define the finite factorization properties using the $\tau$-U-factorization approach.  Let $\alpha \in \{$ irreducible, strongly irreducible, m-irreducible, very strongly irreducible $\}$ and let $\beta \in \{$associate, strongly associate, very strongly associate $\}$.  $R$ is said to be \emph{$\tau$-U-$\alpha$} if for all non-units $a\in R$, there is a $\tau$-U-$\alpha$-factorization of $a$.  $R$ is said to satisfy \emph{$\tau$-U-ACCP} (ascending chain condition on principal ideals) if every properly ascending chain of principal ideals $(a_1) \subsetneq(a_2) \subsetneq \cdots $ such that $a_{i+1}$ is an essential divisor in some $\tau$-U-factorization of $a_i$, for each $i$ terminates after finitely many principal ideals.  $R$ is said to be a \emph{$\tau$-U-BFR} if for all non-units $a\in R$, there is a bound on the number of essential divisors in any $\tau$-U-factorization of $a$. 
\\
\indent $R$ is said to be a \emph{$\tau$-U-$\beta$-FFR} if for all non-units $a\in R$, there are only finitely many $\tau$-U-factorizations up to rearrangement of the essential divisors and $\beta$.  $R$ is said to be a \emph{$\tau$-U-$\beta$-WFFR} if for all non-units $a\in R$, there are only finitely many essential divisors among all $\tau$-U-factorizations of $a$ up to $\beta$.  $R$ is said to be a \emph{$\tau$-U-$\alpha$-$\beta$-divisor finite (df) ring} if for all non-units $a\in R$, there are only finitely many essential $\tau$-$\alpha$ divisors up to $\beta$ in the $\tau$-U-factorizations of $a$. 
\\
\indent $R$ is said to be a \emph{$\tau$-U-$\alpha$-HFR} if $R$ is $\tau$-U-$\alpha$ and for all non-units $a\in R$, the number of essential divisors in any $\tau$-U-$\alpha$-factorization of $a$ is the same.  $R$ is said to be a \emph{$\tau$-U-$\alpha$-$\beta$-UFR} if $R$ is a $\tau$-U-$\alpha$-HFR and the essential divisors of any two $\tau$-U-$\alpha$-factorizations can be rearranged to match up to $\beta$.  
\\
\indent $R$ is said to be \emph{pr\'esimplifiable} if for every $x \in R$, $x=xy$ implies $x=0$ or $y\in U(R)$.  This is a condition which has been well studied and is satisfied by any domain or local ring.  We introduce two slight modifications of this.  $R$ is said to be $\tau$\emph{-pr\'esimplifiable} if for every $x \in R$, the only $\tau$-factorizations of $x$ which contain $x$ as a $\tau$-factor are of the form $x=\lambda x$ for a unit $\lambda$.  $R$ is said to be \emph{$\tau$-U-pr\'esimplifiable} if for every non-zero non-unit $x \in R$, all $\tau$-U-factorizations have no non-unit inessential divisors.
\begin{theorem} Let $R$ be a commutative ring with $1$ and let $\tau$ be a symmetric relation on $R^{\#}$.  We have the following.
\\
(1) If $R$ is pr\'esimplifiable, then $R$ is $\tau$-U-pr\'esimplifiable.  
\\
(2) If $R$ is $\tau$-U-pr\'esimplifiable, then $R$ is $R$ is $\tau$-pr\'esimplifiable.
\\
That is $\text{pr\'esimplifiable} \Rightarrow \tau\text{-U-pr\'esimplifiable} \Rightarrow \tau\text{-pr\'esimplifiable.}$  If $\tau= R^{\#}\times R^{\#}$, then all are equivalent.
\end{theorem}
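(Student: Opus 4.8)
The plan is to prove the chain $\text{pr\'esimplifiable} \Rightarrow \tau\text{-U-pr\'esimplifiable} \Rightarrow \tau\text{-pr\'esimplifiable}$ by two direct arguments, and then, in the case $\tau = R^{\#}\times R^{\#}$, to close the circle by adding $\tau\text{-pr\'esimplifiable} \Rightarrow \text{pr\'esimplifiable}$. Both middle implications are handled contrapositively: starting from a factorization forbidden by the weaker property, one manufactures an explicit factorization forbidden by the stronger one.

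For (1), suppose $R$ is pr\'esimplifiable and let $x$ be a non-zero non-unit with a $\tau$-U-factorization $x = \lambda a_1 \cdots a_n \lceil b_1 \cdots b_m \rceil$; the goal is to force $n = 0$. Writing $a = a_1 \cdots a_n$ and $b = b_1 \cdots b_m$, we have $x = \lambda a b$, and by the Remark following the definition of a U-factorization $(x) = (b)$, hence $(ab) = (b)$. Thus $b = rab$ for some $r \in R$, i.e. $b = b(ra)$; since $x \neq 0$ forces $b \neq 0$, pr\'esimplifiability gives $ra \in U(R)$, so $a \in U(R)$ and every $a_i \in U(R)$, which is incompatible with $a_i \in R^{\#}$ unless $n = 0$. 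Hence no $\tau$-U-factorization of $x$ has a non-unit inessential divisor.

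For (2), suppose $R$ is $\tau$-U-pr\'esimplifiable and let $x = \lambda a_1 \cdots a_n$ be a $\tau$-factorization of $x$ having $x$ itself as one of its $\tau$-factors; the goal is $n = 1$. If $x \in U(R)$ then $x$ has no $\tau$-factorization at all, and if $x = 0$ then $x$ cannot be a $\tau$-factor since $0 \notin R^{\#}$, so we may assume $x \in R^{\#}$ and, reordering via the symmetry of $\tau$, that $a_1 = x$, i.e. $x = \lambda x a_2 \cdots a_n$. The key observation is that this is also a U-factorization in the form $x = \lambda a_2 \cdots a_n \lceil x \rceil$: condition (2) holds because $(x) \neq R$, and condition (1) holds because $(x) = (x a_2 \cdots a_n) \subseteq (x a_i) \subseteq (x)$ gives $a_i(x) = (x)$ for every $i$. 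As $\tau$ is symmetric the underlying product is a $\tau$-factorization, so this is a $\tau$-U-factorization of $x$; if $n \geq 2$ it has the non-unit inessential divisor $a_2 \in R^{\#}$, contradicting $\tau$-U-pr\'esimplifiability. Therefore $n = 1$ and the factorization is $x = \lambda x$.

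For the final assertion, when $\tau = R^{\#}\times R^{\#}$ every expression $x = \lambda a_1 \cdots a_n$ with $\lambda \in U(R)$ and $a_i \in R^{\#}$ is automatically a $\tau$-factorization, so, granting the chain already established, it remains only to show $\tau$-pr\'esimplifiable $\Rightarrow$ pr\'esimplifiable. Suppose $x = xy$ with $x \neq 0$: if $x \in U(R)$ then $y = 1$; if $y = 0$ then $x = 0$, which is excluded; otherwise $x, y \in R^{\#}$, so $x = 1 \cdot x \cdot y$ is a $\tau$-factorization of $x$ containing $x$ with two non-unit factors, contradicting $\tau$-pr\'esimplifiability, whence $y \in U(R)$. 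The only step I expect to require real care is the rearrangement in (2) — seeing that the identity $x = \lambda x a_2 \cdots a_n$ already absorbs $a_2, \ldots, a_n$ once $x$ is taken as the unique essential divisor, and checking both U-factorization conditions for $x = \lambda a_2 \cdots a_n \lceil x \rceil$ — together with the bookkeeping for the degenerate cases $x \in U(R)$ and $x = 0$ in (2) and in the final assertion, where the notions ``$\tau$-factorization'' and ``$\tau$-factor'' are defined only for non-units and for elements of $R^{\#}$.
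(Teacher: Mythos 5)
Your proposal is correct and follows essentially the same route as the paper: in (1) you deduce from $(x)=(b_1\cdots b_m)$ that the inessential part must be a unit (unwinding pr\'esimplifiability directly via $b=b(ra)$ rather than citing that the associate relations coincide), in (2) you rearrange $x=\lambda x a_2\cdots a_n$ into the $\tau$-U-factorization $x=\lambda a_2\cdots a_n\left\lceil x\right\rceil$ exactly as the paper does (even checking the essentiality condition $(x)\neq R$, which the paper leaves implicit), and the closing equivalence for $\tau=R^{\#}\times R^{\#}$ uses the same case analysis on $x=xy$. No gaps of substance.
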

\begin{proof} (1) Let $R$ be pr\'esimplifiable, and $x\in R^{\#}$.  Suppose $x=\lambda a_1 \cdots a_n \left \lceil b_1 \cdots b_m \right \rceil$ is a $\tau$-U-factorization.  Then $(x)=(b_1 \cdots b_m)$.  $R$ pr\'esimplifiable implies that all the associate relations coincide, so in fact $x \cong b_1 \cdots b_m$ implies that $\lambda a_1 \cdots a_n \in U(R)$ and hence all inessential divisors are units.
\\
\indent (2) Let $R$ be $\tau$-U-pr\'esimplifiable, and $x \in R$ such that $x=\lambda xa_1 \cdots a_n$ is a $\tau$-factorization.  We claim that $x= \lambda a_1 \cdots a_n \left \lceil x \right \rceil$ is a $\tau$-U-factorization.  For any $1\leq i \leq n$, $x \mid a_ix$ and $(a_i x)(\lambda a_1 \cdots \widehat{a_i} \cdots a_n)=x$ shows $a_i x \mid x$, proving the claim.  This implies $\lambda a_1 \cdots a_n \in U(R)$ as desired.
\\
\indent Let $\tau=R^{\#} \times R^{\#}$ and suppose $R$ is $\tau$-pr\'esimplifiable.  Suppose $x=xy$, for $x\neq 0$, we show $y\in U(R)$.  If $x\in U(R)$, then multiplying through by $x^{-1}$ yields $1=x^{-1}x=x^{-1}xy=y$ and $y\in U(R)$ as desired.  We may now assume $x\in R^{\#}$.  If $y=0$, then $x=0$, a contradiction.  If $y\in U(R)$ we are already done, so we may assume $y\in R^{\#}$.  Thus $x\tau y$, and $x=xy$ is a $\tau$-factorization, so $y\in U(R)$ as desired.
\end{proof}

\section{$\tau$-U-finite factorization relations}
We now would like to show the relationship between rings with various $\tau$-U-$\alpha$-finite factorization properties as well as compare these rings with the $\tau$-$\alpha$-finite factorization properties of \cite{Mooney}.
\begin{theorem} Let $R$ be a commutative ring with $1$ and let $\tau$ be a symmetric relation on $R^{\#}$.  Consider the following statements.
\\
(1) $R$ is a $\tau$-BFR.
\\
(2) $R$ is $\tau$-pr\'esimplifiable and for every non-unit $a_1 \in R$, there is a fixed bound on the length of chains of principal ideals $(a_i)$ ascending from $a_1$ such that at each stage $a_{i+1} \mid_{\tau} a_i$.
\\
(3) $R$ is $\tau$-pr\'esimplifiable and a $\tau$-U-BFR.
\\
(4) For every non-unit $a\in R$ , there are natural numbers $N_1(a)$ and $N_2(a)$ such that if $a=\lambda a_1 \cdots a_n \left\lceil b_1 \cdots b_m\right\rceil$ is a $\tau$-U-factorization, then $n \leq N_1(a)$ and $m \leq N_2(a)$.
\\
\indent Then (4) $\Rightarrow$ (1) and (2) $\Rightarrow$ (3).  For $\tau$ refinable, (1) $\Rightarrow$ (2) and for $R$ $\tau$-U-pr\'esimplifiable, (3) $\Rightarrow$ (4).  Thus all are equivalent if $R$ is $\tau$-U-pr\'esimplifiable and $\tau$ is refinable.  
\\
\indent Let $\star$ represent $\tau$ being refinable, and $\dagger$ represent $R$ being $\tau$-U-pr\'esimplifiable, then the following diagram summarizes the theorem.
$$\xymatrix{(1) \ar@{=>}[r]^{\star}& (2) \ar@{=>}[d]\\
(4)\ar@{=>}[u] & (3) \ar@{=>}[l]^{\dagger}}$$

\end{theorem}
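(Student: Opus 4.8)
The plan is to prove the four implications shown in the diagram, using two recurring devices: Theorem~\ref{thm: rearrange} together with the symmetry of $\tau$, which rearranges any $\tau$-factorization into a $\tau$-U-factorization without changing the total number of non-unit factors; and the refinability of $\tau$, which lets one splice a finite chain of $\tau$-factorizations into a single $\tau$-factorization.

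I would dispatch the two hypothesis-free arrows first. For $(4)\Rightarrow(1)$: take any $\tau$-factorization $a=\lambda a_1\cdots a_k$ of a non-unit $a$, rearrange it into a $\tau$-U-factorization with $n$ inessential and $m$ essential non-unit divisors, and observe $k=n+m\le N_1(a)+N_2(a)$; so the number of $\tau$-factors of $a$ is bounded and $R$ is a $\tau$-BFR. For $(2)\Rightarrow(3)$ the $\tau$-pr\'esimplifiable clause is shared, so the content is that the chain bound forces the $\tau$-U-BFR property. Given a $\tau$-U-factorization $a=\lambda a_1\cdots a_n\lceil b_1\cdots b_m\rceil$, I would look at the partial products $c_k=b_1\cdots b_k$: if $(c_k)=(c_{k-1})$ then multiplying through by $b_{k+1}\cdots b_m$ gives $(b_1\cdots b_m)=(b_1\cdots\widehat{b_k}\cdots b_m)$, which contradicts clause (2) of the definition of a U-factorization; hence $(a)=(c_m)\subsetneq(c_{m-1})\subsetneq\cdots\subsetneq(c_1)$ is a strictly ascending chain of principal ideals from $(a)$, and provided each step is a $\tau$-divisibility of the sort counted in $(2)$, the number $m$ is at most the chain bound attached to $a$.

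For $(1)\Rightarrow(2)$, with $\tau$ refinable, I would argue by contradiction in two stages. First, to get $\tau$-pr\'esimplifiability: if $x=\lambda x a_1\cdots a_n$ is a $\tau$-factorization with $x$ among its factors and $n\ge1$, then refining that occurrence of $x$ by the very same factorization (and refining the other factors trivially) yields, by refinability, a $\tau$-factorization of $x$ with strictly more non-unit factors; iterating produces $\tau$-factorizations of $x$ of unbounded length, contradicting the $\tau$-BFR bound for $x$, so $n$ must be $0$. Second, for the chain bound: a chain $(a_1)\subsetneq(a_2)\subsetneq\cdots\subsetneq(a_\ell)$ with $a_{i+1}\mid_\tau a_i$ provides, at each step, a $\tau$-factorization $a_i=\lambda_i a_{i+1}c_{i1}\cdots c_{ik_i}$ with $k_i\ge1$ (properness rules out $k_i=0$); splicing these from the top down, refinability turns the result into one $\tau$-factorization of $a_1$ with at least $\ell-1$ non-unit factors, so $\ell$ is bounded by one more than the $\tau$-BFR bound for $a_1$.

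Finally, $(3)\Rightarrow(4)$ under the hypothesis that $R$ is $\tau$-U-pr\'esimplifiable: that hypothesis says exactly that a $\tau$-U-factorization of a nonzero non-unit has no non-unit inessential divisor, so $N_1(a)=0$ works, and the $\tau$-U-BFR clause of $(3)$ furnishes $N_2(a)$. Chasing the diagram, once $R$ is $\tau$-U-pr\'esimplifiable --- hence $\tau$-pr\'esimplifiable --- and $\tau$ is refinable, the four statements close into a cycle and are equivalent. The step I expect to be the real obstacle is the $\tau$-divisibility bookkeeping inside $(2)\Rightarrow(3)$: I need to know that the ascending chain of partial products of essential divisors is actually a chain of the type counted in $(2)$, i.e., that each $b_1\cdots b_{k-1}$ occurs as a $\tau$-factor of $b_1\cdots b_k$, and keeping the three pr\'esimplifiability notions properly separated; by comparison, the rearrangement and refinement arguments are routine once the relevant structural property of $\tau$ is invoked.
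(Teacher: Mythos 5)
Your proposal follows essentially the same route as the paper's proof: rearranging a $\tau$-factorization into a $\tau$-U-factorization and counting $n+m\le N_1(a)+N_2(a)$ for (4)$\Rightarrow$(1), the strictly ascending chain of partial products $(b_1\cdots b_m)\subsetneq(b_1\cdots b_{m-1})\subsetneq\cdots\subsetneq(b_1)$ for (2)$\Rightarrow$(3), iterated refinement for both the pr\'esimplifiability and the chain-bound halves of (1)$\Rightarrow$(2), and the trivial choice $N_1(a)=0$, $N_2(a)=N_e(a)$ for (3)$\Rightarrow$(4). The bookkeeping concern you flag in (2)$\Rightarrow$(3) --- that each $b_1\cdots b_{k-1}$ must actually satisfy $b_1\cdots b_{k-1}\mid_\tau b_1\cdots b_k$ for the chain to be of the type counted in (2) --- is asserted without justification in the paper as well, so your write-up is if anything more explicit (you also prove strictness of the chain, which the paper only cites implicitly from the U-factorization definition).
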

\begin{proof} (1) $\Rightarrow$ (2) Let $\tau$ be refinable.  Suppose there were a non-trivial $\tau$-factorization $x=\lambda x a_1 \cdots a_n$ with $n \geq 1$.  Since $\tau$ is assumed to be refinable we can continue to replace the $\tau$-factor $x$ with this factorization.
$$x=\lambda x a_1 \cdots a_n=(\lambda \lambda) x a_1 \cdots a_na_1 \cdots a_n=\cdots = (\lambda \lambda \lambda) x a_1 \cdots a_n a_1 \cdots a_n a_1 \cdots a_n= \cdots$$
yields an unbounded series of $\tau$-factorizations of increasing length.  
\\
\indent Let $a_1$ be a non-unit in $R$.  Suppose $N$ is the bound on the length of any $\tau$-factorization of $a_1$.  We claim that $N$ satisfies the requirement of (2).  Let $(a_1) \subsetneq (a_2) \subsetneq \cdots$ be an ascending chain of principal ideals generated by elements which satisfy $a_{i+1} \mid_{\tau} a_i$ for each $i$.  Say $a_i=\lambda_i a_{i+1}a_{i1} \cdots a_{i{n_i}}$ for each $i$.  Furthermore, we can assume $n_i\geq 1$ for each $i$ or else the containment would not be proper.  Then we can write
$$a_1=\lambda_1 a_2a_{11} \cdots a_{1{n_1}}=\lambda_1 \lambda_2 a_3a_{21} \cdots a_{2{n_2}}a_{11} \cdots a_{1{n_1}}= \cdots.$$
\indent Each remains a $\tau$-factorization since $\tau$ is refinable and we have added at least one factor at each step.  If the chain were greater than length $N$ we would contradict $R$ being a $\tau$-BFR.
\\
\indent (2) $\Rightarrow$ (3) Let $a\in R$ be a non-unit.  Let $N$ be the bound on the length of any properly ascending chain of principle ideals ascending from $a$ such that $a_{i+1} \mid_{\tau} a_i$.  If $a=\lambda a_1 \cdots a_n \left\lceil b_1 \cdots b_m\right\rceil$ is a $\tau$-U-factorization, then we get an ascending chain with $b_1 \cdots b_{i-1} \mid_\tau b_1 \cdots b_i$ for each $i$: 
$$(a)=(b_1 \cdots b_m) \subsetneq (b_1 \cdots b_{m-1}) \subsetneq (b_1 \cdots b_{m-2}) \subsetneq \cdots \subsetneq (b_1b_2) \subsetneq (b_1).$$
Hence, $m \leq N$ and we have found a bound on the number of essential divisors in any $\tau$-U-factorization of $a$, making $R$ a $\tau$-U-BFR.
\\
\indent (3) $\Rightarrow$ (4) Let $a\in R$ be a non-unit.  Let $N_e(a)$ be the bound on the number of essential divisors in any $\tau$-U-factorization of $a$.  Since $R$ is $\tau$-U-pr\'esimplifiable, there are no inessential $\tau$-U-divisors of $a$.  We can set $N_1(a)=0$, and $N_2(a)=N_e(a)$ and see that this satisfies the requirements of the theorem.
\\
\indent (4) $\Rightarrow$ (1) Let $a\in R$ be a non-unit.  Then any $\tau$-factorization $a=\lambda a_1 \cdots a_n$ can be rearranged into a $\tau$-U-factorization, say $a=\lambda a_{s_1} \cdots a_{s_i} \left\lceil a_{s_{i+1}} \cdots a_{s_n}\right\rceil$.  But then $n=i+(n-i) \leq N_1(a) + N_2(a)$.  Hence the length of any $\tau$-factorization must be less than $N_1(a) + N_2(a)$ proving $R$ is a $\tau$-BFR as desired.
\end{proof}
\indent The way we have defined our finite factorization properties on only the essential divisors causes a slight problem.  Given a $\tau$-U-factorization $a=\lambda a_1 \cdots a_n \left \lceil b_1 \cdots b_m \right \rceil$, we only know that $a\sim b_1 \cdots b_m$.  This may no longer be a $\tau$-factorization of $a$, but rather only some associate of $a$.  This is easily remedied by insisting that our rings are strongly associate.  

\begin{lemma}\label{lem: sa} Let $R$ be a strongly associate ring with $\tau$ a symmetric relation on $R^{\#}$, and let $\alpha \in \{$irreducible, strongly irreducible, m-irreducible, very strongly irreducible$\}$. Let $a\in R$, a non-unit.  If $a=\lambda a_1 a_2 \cdots a_n \left\lceil b_1 b_2\cdots b_m \right\rceil$ is a $\tau$-U-$\alpha$-factorization, then there is a unit $\mu\in U(R)$ such that $a=\mu b_1 \cdots b_m$ is a $\tau$-$\alpha$-factorization.
\end{lemma}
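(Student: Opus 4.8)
The plan is to exploit the strongly associate hypothesis to upgrade the ideal-theoretic identity $(a)=(b_1\cdots b_m)$, which is built into every U-factorization, to an honest equation $a=\mu b_1\cdots b_m$ with $\mu$ a unit, and then to observe that being a $\tau$-$\alpha$-factorization is inherited for free from the given $\tau$-U-$\alpha$-factorization.

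First I would record, using the Remark following the definition of U-factorization (or simply the definition itself), that since $a=\lambda a_1\cdots a_n\left\lceil b_1\cdots b_m\right\rceil$ is a U-factorization we have $(a)=(b_1\cdots b_m)$, that is, $a\sim b_1\cdots b_m$. Because $R$ is a strongly associate ring, $a\sim b_1\cdots b_m$ forces $a\approx b_1\cdots b_m$, so there is a unit $\mu\in U(R)$ with $a=\mu b_1\cdots b_m$; in the degenerate case $a=0$ one simply has $b_1\cdots b_m=0$ and takes $\mu=1$. Each $b_i$ lies in $R^{\#}$ since it is an essential divisor, so $a=\mu b_1\cdots b_m$ already has the correct shape to be a $\tau$-factorization.

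Next I would check it genuinely is one: by definition of a $\tau$-U-factorization, $\lambda a_1\cdots a_n b_1\cdots b_m$ is a $\tau$-factorization, so in particular $b_i\,\tau\,b_j$ for all $i\neq j$, which are exactly the pairwise conditions required for $a=\mu b_1\cdots b_m$ to be a $\tau$-factorization. Finally, since the original factorization is a $\tau$-U-$\alpha$-factorization, each essential divisor $b_i$ is $\tau$-$\alpha$, and that is precisely the additional condition making $a=\mu b_1\cdots b_m$ a $\tau$-$\alpha$-factorization.

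I do not expect a substantive obstacle: every step is a direct unwinding of definitions, and the only ring-theoretic input is the defining property of a strongly associate ring. The one place to be careful is not to assert more than ``associate'' before invoking that hypothesis — without it one only obtains $(a)=(b_1\cdots b_m)$ and cannot extract the unit $\mu$, which is exactly why the strongly associate assumption is present — and to dispatch the case $a=0$ explicitly, although it is immediate.
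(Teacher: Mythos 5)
Your proof is correct and follows essentially the same route as the paper: both pass from $(a)=(b_1\cdots b_m)$ to $a\approx b_1\cdots b_m$ via the strongly associate hypothesis, extract the unit $\mu$, and note that the pairwise $\tau$-relations and the $\tau$-$\alpha$ property of the essential divisors carry over unchanged. Your aside about the case $a=0$ is harmless but unnecessary, since $a\approx b_1\cdots b_m$ already means by definition that $a=\mu b_1\cdots b_m$ for some unit $\mu$.
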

\begin{proof} Let $a=\lambda a_1 a_2 \cdots a_n \left\lceil b_1 b_2\cdots b_m \right\rceil$ be a $\tau$-U-$\alpha$-factorization.  By definition, $(a)=(b_1 \cdots b_m)$, and $R$ strongly associate implies that $a \approx b_1 \cdots b_m$.  Let $\mu\in U(R)$ be such that $a=\mu b_1 \cdots b_m$.  We still have $b_i \tau b_j$ for all $i\neq j$, and $b_i$ is $\tau$-$\alpha$ for every $i$.  Hence $a=\mu b_1 \cdots b_m$ is the desired $\tau$-factorization, proving the lemma.
\end{proof}
\begin{theorem}\label{thm: gen rel} Let $R$ be a commutative ring with $1$, and let $\tau$ be a symmetric relation on $R^{\#}$.  Let $\alpha \in \{$irreducible, strongly irreducible, m-irreducible, very strongly irreducible$\}$, and $\beta \in \{$associate, strongly associate, very strongly associate $\}$.    We have the following.
\\
(1) If $R$ is $\tau$-$\alpha$, then $R$ is $\tau$-U-$\alpha$.
\\
(2) If $R$ satisfies $\tau$-ACCP, then $R$ satisfies $\tau$-U-ACCP.
\\
(3) If $R$ is a $\tau$-BFR, then $R$ is a $\tau$-U-BFR.
\\
(4) If $R$ is a $\tau$-$\beta$-FFR, then $R$ is a $\tau$-U-$\beta$-FFR.
\\
(5) Let $R$ be a $\tau$-$\beta$-WFFR, then $R$ is a $\tau$-U-$\beta$-WFFR.
\\
(6) Let $R$ be a $\tau$-$\alpha$-$\beta$-divisor finite ring, then $R$ is $\tau$-U-$\alpha$-$\beta$-divisor finite ring.
\\
(7) Let $R$ be a strongly associate $\tau$-$\alpha$-HFR (resp. $\tau$-$\alpha$-$\beta$-UFR), then $R$ is $\tau$-U-$\alpha$-HFR (resp. $\tau$-U-$\alpha$-$\beta$-UFR).
\end{theorem}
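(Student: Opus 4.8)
The plan is to derive all seven statements from two facts already in place. First, by Theorem~\ref{thm: rearrange} and Corollary~\ref{cor: rearrange}, every $\tau$-factorization (resp.\ $\tau$-$\alpha$-factorization) of a non-unit can be rearranged into a $\tau$-U-factorization (resp.\ $\tau$-U-$\alpha$-factorization). Second, conversely, if $a=\lambda a_1\cdots a_n\lceil b_1\cdots b_m\rceil$ is a $\tau$-U-factorization then by definition $a=\lambda a_1\cdots a_n b_1\cdots b_m$ is a $\tau$-factorization of $a$, so in particular each essential divisor $b_j$ satisfies $b_j\mid_\tau a$ and retains whatever intrinsic property it had ($\tau$-$\alpha$-irreducibility, membership in a fixed $\beta$-associate class). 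With these, parts (1)--(6) are bookkeeping.

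For (1), take a $\tau$-$\alpha$-factorization of a non-unit $a$ (it exists since $R$ is $\tau$-$\alpha$) and apply Corollary~\ref{cor: rearrange}. For (2), any chain $(a_1)\subsetneq(a_2)\subsetneq\cdots$ in which each $a_{i+1}$ is an essential divisor of a $\tau$-U-factorization of $a_i$ is automatically a chain with $a_{i+1}\mid_\tau a_i$, hence terminates by $\tau$-ACCP. For (3), a $\tau$-U-factorization of $a$ with $m$ essential and $n$ inessential divisors spreads out to a $\tau$-factorization of $a$ of length $n+m\ge m$, so the $\tau$-BFR bound on lengths bounds $m$. For (5) and (6), the essential divisors (resp.\ essential $\tau$-$\alpha$ divisors) occurring in $\tau$-U-factorizations of $a$ form a subset of the $\tau$-factors (resp.\ $\tau$-$\alpha$ $\tau$-factors) of $a$, so their finiteness up to $\beta$ is inherited.

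Part (4) needs the extra remark that the passage ``$\tau$-U-factorization $\mapsto$ underlying $\tau$-factorization'' has finite fibers: once a $\tau$-factorization of $a$ is fixed, a $\tau$-U-factorization refining it is pinned down by the choice of which sub-collection of factors is essential, and in a $\tau$-$\beta$-FFR the lengths of $\tau$-factorizations of $a$ are bounded, so only finitely many such choices occur (those failing the U-factorization conditions (1), (2) only shrink the count). Hence finitely many $\tau$-factorization classes of $a$ force finitely many $\tau$-U-factorization classes, so $R$ is a $\tau$-U-$\beta$-FFR.

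Part (7) is the one place the strongly associate hypothesis is genuinely used, via Lemma~\ref{lem: sa}, which converts a $\tau$-U-$\alpha$-factorization $a=\lambda a_1\cdots a_n\lceil b_1\cdots b_m\rceil$ into an honest $\tau$-$\alpha$-factorization $a=\mu b_1\cdots b_m$ with the same $m$ factors. If $R$ is a $\tau$-$\alpha$-HFR then it is $\tau$-$\alpha$, hence $\tau$-U-$\alpha$ by (1), and since all $\tau$-$\alpha$-factorizations of $a$ share a common length, $m$ is constant over all $\tau$-U-$\alpha$-factorizations of $a$; thus $R$ is a $\tau$-U-$\alpha$-HFR. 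For the UFR statement (whose half-factorization part is the case just treated), apply Lemma~\ref{lem: sa} to two $\tau$-U-$\alpha$-factorizations of $a$, use the $\tau$-$\alpha$-$\beta$-UFR property to match the resulting $\tau$-$\alpha$-factorizations up to $\beta$ after reindexing, and read this back as the required matching of essential divisors. I do not anticipate a real obstacle; the only points needing care are verifying in (4) that the fiber count survives working modulo rearrangement and $\beta$, and ensuring in (7) that Lemma~\ref{lem: sa} is invoked with the $\alpha$ matching the hypothesis so that ``common number of essential divisors'' and ``common length of $\tau$-$\alpha$-factorization'' line up after the unit adjustment.
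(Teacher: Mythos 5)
Your proposal is correct and follows essentially the same route as the paper: parts (1)--(3), (5), (6) by passing between a $\tau$-U-factorization and its underlying $\tau$-factorization (via Corollary~\ref{cor: rearrange} in one direction), and part (7) via Lemma~\ref{lem: sa} with the strongly associate hypothesis. Your finite-fiber remark in (4) is a more explicit version of the paper's one-line observation that every $\tau$-U-factorization sits over a $\tau$-factorization, but it is the same idea.
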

\begin{proof} (1) This is immediate from Corollary \ref{cor: rearrange}.
\\
\indent (2) Suppose there were a infinite properly ascending chain of principal ideals $(a_1) \subsetneq (a_2) \subsetneq \cdots$ such that $a_{i+1}$ is an essential divisor in some $\tau$-U-factorization of $a_i$, for each $i$.  Every essential $\tau$-U-divisor is certainly a $\tau$-divisor.  This would contradict the fact that $R$ satisfies $\tau$-ACCP.
\\
\indent (3) We suppose that there is a non-unit $a\in R$ with $\tau$-U-factorizations having arbitrarily large numbers of essential $\tau$-U-divisors.  Each is certainly a $\tau$-factorization, having at least as many $\tau$-factors as there are essential $\tau$-divisors, so this would contradict the hypothesis.
\\
\indent (4) Every $\tau$-U-factorization is certainly among the $\tau$-factorizations.  If the latter is finite, then so is the former.
\\
\indent (5) For any given non-unit $a\in R$, every essential $\tau$-U-divisor of $a$ is certainly a $\tau$-factor of $a$ which has only finitely many up to $\beta$.  Hence there can be only finitely many essential $\tau$-U-factors up to $\beta$.
\\
\indent (6) Let $a\in R$ be a non-unit.  Every essential $\tau$-U-$\alpha$-divisor of $a$ is a $\tau$-$\alpha$-factor of $a$.  There are only finitely many $\tau$-$\alpha$-divisors up to $\beta$, so then there can be only finitely many $\tau$-U-$\alpha$-divisors of $a$ up to $\beta$.
\\
\indent (7) We have already seen that $R$ being $\tau$-$\alpha$ implies $R$ is $\tau$-U-$\alpha$.  Let $a\in R$ be a non-unit.  We suppose for a moment there are two $\tau$-$\alpha$-U-factorizations: 
$$a=\lambda a_1 \cdots a_n \left\lceil b_1 \cdots b_m\right\rceil=\lambda' a'_1 \cdots a'_{n'} \left\lceil b'_1 \cdots b'_{m'}\right\rceil$$
such that $m \neq m'$ (resp. $m \neq m'$ or there is no rearrangement such that $b_i$ and $b'_i$ are $\beta$ for each $i$).  Lemma \ref{lem: sa} implies $\exists \mu, \mu' \in U(R)$ with $a=\mu b_1 \cdots b_m=\mu' b'_1 \cdots b'_{m'}$ are two $\tau$-$\alpha$-factorizations of $a$, so $m=m'$ (resp. $m=m'$ and there is a rearrangement so that $b_i$ and $b'_i$ are $\beta$ for each $1 \leq i \leq m$), a contradiction, proving $R$ is indeed a $\tau$-U-$\alpha$-HFR (resp. -$\beta$-UFR) as desired.
\end{proof}

\begin{theorem} \label{thm: tau u relations} Let $R$ be a commutative ring with 1 and $\tau$ a symmetric relation on $R^{\#}$.  Let $\alpha \in \{$irreducible, strongly irreducible, m-irreducible, very strongly irreducible$\}$, and let $\beta \in \{$associate, strongly associate, very strongly associate$\}$.  \\
(1) If $R$ is a $\tau$-U-$\alpha$-$\beta$-UFR, then $R$ is a $\tau$-$\alpha$-U-HFR. 
\\
(2) If $R$ is $\tau$-U-refinable and $R$ is a $\tau$-U-$\alpha$-$\beta$-UFR, then $R$ is a $\tau$-U-$\beta$-FFR.
\\
(3) If $R$ is $\tau$-U-refinable and $R$ is a $\tau$-U-$\alpha$-HFR, then $R$ is a $\tau$-U-BFR.
\\
(4) If $R$ is a $\tau$-U-$\beta$-FFR, then $R$ is a $\tau$-U-BFR.
\\
(5) If $R$ is a $\tau$-U-$\beta$-FFR, then $R$ is a $\tau$-U-$\beta$-WFFR.
\\
(6) If $R$ is a $\tau$-U-$\beta$-WFFR, then $R$ is a $\tau$-U-$\alpha$-$\beta$-divisor finite ring.
\\
(7) If $R$ is $\tau$-U-refinable and $R$ is a $\tau$-U-$\alpha$-BFR, then $R$ satisfies $\tau$-U-ACCP.
\\
(8) If $R$ is $\tau$-U-refinable and $R$ satisfies $\tau$-U-ACCP, then $R$ is $\tau$-U-$\alpha$.
\end{theorem}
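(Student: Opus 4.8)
The eight implications are the $\tau$-U-analogues of the corresponding facts for $\tau$-factorization in \cite{Mooney}, obtained by replacing ``$\tau$-factor'' with ``essential divisor'' and ``$\tau$ refinable'' with ``$R$ is $\tau$-U-refinable''. The plan is to dispatch them from the most formal to the most substantial.

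Statement (1) is immediate, since by definition a $\tau$-U-$\alpha$-$\beta$-UFR is in particular a $\tau$-U-$\alpha$-HFR. For (4)--(6) I would argue directly. If some non-unit $a$ admitted $\tau$-U-factorizations with unboundedly many essential divisors, these would be pairwise distinct under rearrangement of the essential divisors and $\beta$ (both of which preserve the number of essential divisors), contradicting the $\tau$-U-$\beta$-FFR hypothesis; this gives (4). For (5), the essential divisors occurring across the finitely many $\tau$-U-factorizations of a given non-unit form a finite list, a fortiori finite up to $\beta$. Statement (6) holds because every essential $\tau$-$\alpha$-divisor of $a$ is in particular an essential divisor of $a$, and there are only finitely many of the latter up to $\beta$.

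The core of (3), (7), (8) is $\tau$-U-refinability together with the observation (via the opening Remark of the introduction and the theorem characterizing $\tau$-irreducibility by having exactly one essential divisor) that a $\tau$-U-factorization $a=\lambda a_1\cdots a_n\lceil b_1\cdots b_m\rceil$ with $m\ge 2$ satisfies $(a)\subsetneq(b_j)$ for every $j$, so that any refinement step splitting an essential divisor into at least two essential divisors strictly increases the number of essential divisors and strictly enlarges the relevant ideals. For (3): fix a non-unit $a$; since $R$ is $\tau$-U-$\alpha$, every essential divisor appearing in a $\tau$-U-factorization of $a$ has a $\tau$-U-$\alpha$-factorization, and splicing these in via $\tau$-U-refinability yields a $\tau$-U-$\alpha$-factorization of $a$ whose number of essential divisors is at least that of the original; the $\tau$-U-$\alpha$-HFR property bounds the former and hence the latter, so $R$ is a $\tau$-U-BFR. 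For (7), where one only needs the $\tau$-U-BFR bound (whichever reading of ``$\tau$-U-$\alpha$-BFR'' is intended): if $\tau$-U-ACCP failed, take $(a_1)\subsetneq(a_2)\subsetneq\cdots$ with $a_{i+1}$ an essential divisor in a $\tau$-U-factorization of $a_i$; since $(a_i)\subsetneq(a_{i+1})$ that factorization is non-trivial, hence has a second essential divisor, and refining the factorization of $a_1$ repeatedly down the chain produces $\tau$-U-factorizations of $a_1$ with arbitrarily many essential divisors, contradicting $\tau$-U-BFR. For (8), suppose a non-unit $a$ has no $\tau$-U-$\alpha$-factorization; then $a$ is not $\tau$-$\alpha$ (otherwise $a=\lceil a\rceil$ suffices), so $a$ has a non-trivial $\tau$-U-factorization, and not every essential divisor can itself have a $\tau$-U-$\alpha$-factorization (else splicing via $\tau$-U-refinability would produce one for $a$); pick such an essential divisor $b$, note $(a)\subsetneq(b)$, and iterate to build an ascending chain violating $\tau$-U-ACCP. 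I would carry out this last descent cleanly for $\alpha=\text{irreducible}$ and expect the finer notions of $\alpha$ to require extra care in checking that a non-$\tau$-$\alpha$ element admits a $\tau$-U-factorization properly enlarging $(a)$.

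That leaves (2), the main case. Fix a non-unit $a$. The $\tau$-U-$\alpha$-$\beta$-UFR hypothesis provides a $\tau$-U-$\alpha$-factorization of $a$ with a fixed number $k$ of essential divisors $c_1,\dots,c_k$, determined up to rearrangement and $\beta$. Given any $\tau$-U-factorization $a=\lambda a_1\cdots a_n\lceil b_1\cdots b_m\rceil$, refine each $b_i$ by a $\tau$-U-$\alpha$-factorization of it; $\tau$-U-refinability then produces a $\tau$-U-$\alpha$-factorization of $a$, so by the HFR and UFR properties the essential divisors obtained form a $\beta$-relabelling of $c_1,\dots,c_k$ partitioned into $m\le k$ groups, one per $b_i$, with $(b_i)$ generated by the product over its group. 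Since a $k$-element multiset has only finitely many partitions, there are only finitely many possibilities for the multiset $\{(b_1),\dots,(b_m)\}$, giving finiteness of $\tau$-U-factorizations up to rearrangement and associate. The step I expect to be the real obstacle is promoting ``up to associate'' to ``up to $\beta$'' when $\beta$ is strong or very strong associate: products of strong associates are again strong associates, so $b_i$ is pinned down up to $\beta$ by its group once the inessential divisors and leading unit are absorbed, but for $\beta$ very strong associate one must verify that the defining cancellation property transfers to sub-products; failing that, I would record (2) under the additional hypothesis that $R$ is $\beta$-associate, or restrict to the associate case as in the corresponding result of \cite{Mooney}.
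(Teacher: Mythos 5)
Your proposal is correct and follows essentially the same route as the paper's proof: (1) and (4)--(6) by direct counting, (3), (7), (8) by splicing $\tau$-U-$\alpha$-factorizations of essential divisors via $\tau$-U-refinability together with the proper-chain/essentiality observation, and (2) by refining an arbitrary $\tau$-U-factorization against the unique $\tau$-U-$\alpha$-factorization and counting the finitely many groupings of its essential divisors. The two subtleties you flag --- handling the finer choices of $\alpha$ in (8) and promoting ``up to associate'' to ``up to $\beta$'' in (2) --- are genuine, but the paper's own argument passes over them just as quickly, so your approach does not diverge from it.
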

\begin{proof} (1) This is immediate from definitions.
\\
\indent (2) Let $a\in R$ be a non-unit.  Let $a=\lambda a_1 \cdots a_n \left\lceil b_1 \cdots b_m\right\rceil$ be the unique $\tau$-$\alpha$-U-factorization up to rearrangement and $\beta$.  Given any other $\tau$-U-factorization, we can $\tau$-U-refine each essential $\tau$-U-divisor into a $\tau$-U-$\alpha$-factorization of $a$.  There is a rearrangement of the essential divisors to match up to $\beta$ with $b_i$ for each $1\leq i \leq m$.  Thus the essential divisors in any $\tau$-U-factorization come from some combination of products of $\beta$ of the $m$ $\tau$-U-$\alpha$ essential factors in our original factorization.  Hence there are at most $2^{m}$ possible distinct $\tau$-U-factorizations up to $\beta$, making this a $\tau$-U-$\beta$-FFR as desired.
\\
\indent (3) For a given non-unit $a\in R$, the number of essential divisors in any $\tau$-U-$\alpha$-factorization is the same, say $N$.  We claim this is a bound on the number of essential divisors of any $\tau$-U-factorization.  Suppose there were a $\tau$-U-factorization $a=\lambda a_1 \cdots a_n \left\lceil b_1 \cdots b_m\right\rceil$ with $m > N$.  For every $i$, $b_i$ has a $\tau$-U-$\alpha$-factorization with at least one essential divisor.  Since $R$ is $\tau$-U-refinable, we can $\tau$-U-refine the factorization yielding a $\tau$-U-$\alpha$-factorization of $a$ with at least $m$ $\tau$-U-$\alpha$ essential factors.  This contradicts the assumption that $R$ is a $\tau$-U-$\alpha$-HFR.
\\
\indent (4) Let $R$ be a $\tau$-U-$\beta$-FFR.  Let $a\in R$ be a non-unit.  There are only finitely many $\tau$-U-factorizations of $a$ up to rearrangement and $\beta$ of the essential divisors.  We can simply take the maximum of the number of essential divisors among all of these factorizations.  This is an upper bound for the number of essential divisors in any $\tau$-U-factorization.
\\
\indent (5) Let $R$ be a $\tau$-U-$\beta$-FFR, then for any non-unit $a\in R$.  Let $S$ be the collection of essential divisors in the finite number of representative $\tau$-U-factorizations of $a$ up to $\beta$.  This gives us a finite collection of elements up to $\beta$.  Every essential divisor up to $\beta$ in a $\tau$-U-factorization of $a$ must be among these, so this collection is finite as desired.
\\
\indent (6) If every non-unit $a\in R$ has a finite number of proper essential $\tau$-U divisors, then certainly there are a finite number of essential $\tau$-$\alpha$-U-divisors.
\\
\indent (7) Suppose $R$ is a $\tau$-U-BFR, but $(a_1) \subsetneq (a_2) \subsetneq \cdots$ is a properly ascending chain of principal ideals such that $a_{i+1}$ is an essential factor in some $\tau$-U-factorization of $a_i$, say 
$$a_i=\lambda_i a_{i1} \cdots a_{i{n_i}}\left\lceil a_{i+1} b_{i1} \cdots b_{im_i}\right\rceil$$ 
for each $i$.  Furthermore, $m_i \geq 1$, for each $i$ otherwise we would have $(a_{i+1})=(a_i)$ contrary to our assumption that our chain is properly increasing.  Our assumption that $R$ is $\tau$-U refinable allows us to factor $a_1$ as follows:
$$a_1=\lambda_1 a_{11} \cdots a_{1{n_1}}\left\lceil a_{2} b_{11} \cdots b_{1{m_1}}\right\rceil=$$
$$\lambda_1 \lambda_2 a_{11} \cdots a_{1{n_1}}a_{21} \cdots a_{2{n_2}}\left\lceil a_{3} b_{21} \cdots b_{2{m_2}} b_{11} \cdots b_{1{m_1}}\right\rceil$$
and so on.  At each iteration $i$ we have at least $i+1$ essential factors in our $\tau$-U-factorization.  This contradicts the assumption that $a_1$ should have a bound on the number of essential divisors in any $\tau$-U-factorization.
\\
\indent (8) Let $a_1\in R$ be a non-unit.  If $a_1$ is $\tau$-U-$\alpha$ we are already done, so there must be a non-trivial  $\tau$-U factorization of $a_1$, say:
$$a_1=\lambda_1 a_{11} \cdots a_{1{n_1}}\left\lceil a_{2} b_{11} \cdots b_{1{m_1}}\right\rceil.$$
Now if all of the essential divisors are $\tau$-U-$\alpha$ we are done as we have found a $\tau$-U-$\alpha$-factorization.  After rearranging if necessary, we suppose that $a_2$ is not $\tau$-U-$\alpha$.  Therefore $a_2$ has a non-trivial $\tau$-U-factorization, say:
$$a_2=\lambda_2 a_{21} \cdots a_{2{n_1}}\left\lceil a_{3} b_{21} \cdots b_{2{m_2}}\right\rceil.$$
Because $R$ is $\tau$-U-refinable, this gives us a $\tau$-U-factorization:
$$a_1=\lambda_1\lambda_2 a_{11} \cdots a_{1{n_1}}a_{21} \cdots a_{2{n_2}}\left\lceil a_{3} b_{21} \cdots b_{2{m_2}} b_{1_1} \cdots b_{1{m_1}}\right\rceil$$
which cannot be $\tau$-U-$\alpha$ or else we would be done.  We can continue in this fashion and get an ascending chain of principal ideals 
$$(a_1) \subseteq (a_2) \subseteq \cdots$$
such that $a_{i+1}$ is an essential $\tau$-U-divisor of $a_i$ for each $i$.  
\\
\indent \emph{Claim}: This chain must be properly ascending.  Suppose $(a_i)=(a_{i+1})$ for some $i$.  When we look at $a_i=\lambda_i a_{i1} \cdots a_{i{n_i}}\left\lceil a_{i+1} b_{i1} \cdots b_{i{m_i}}\right\rceil$, we see that $(a_i)=(a_{i+1} b_{i1} \cdots b_{i_{m_i}})$.  But then we could remove any of the $b_{ij}$ for any $1 \leq j \leq m_i$ and still have $(a_i)=(a_{i+1} b_{i1} \cdots \widehat{b_{ij}} \cdots b_{i{m_i}})$ contradicting the fact that the factorization was a $\tau$-U-factorization since $b_{ij}$ is inessential. 
\\
\indent We certainly have  $(a_i)\subseteq (a_{i+1} b_{i1} \cdots \widehat{b_{ij}} \cdots b_{i{m_i}})$.  To see the other containment holds, $(a_i)=(a_{i+1}) \Rightarrow a_{i+1}=a_i r$ for some $r\in R$, and we can simply multiply by $b_{i1} \cdots \widehat{b_{ij}} \cdots b_{i{m_i}}$ on both sides to see that 
$$a_{i+1}b_{i1} \cdots \widehat{b_{ij}} \cdots b_{i{m_i}}=a_i (rb_{i1} \cdots \widehat{b_{ij}} \cdots b_{i{m_i}})$$
showing the other containment.  Proving the claim.
\\
\indent This is a contradiction to the fact that $R$ satisfies $\tau$-U-ACCP, proving we must in finitely many steps arrive at a $\tau$-U-$\alpha$-factorization of $a_1$, proving $R$ is indeed $\tau$-U-$\alpha$ as desired.
\end{proof}
\indent The following diagram summarizes our results from the Theorems \ref{thm: gen rel} and \ref{thm: tau u relations} where $\star$ represents $R$ being strongly associate, and $\dagger$ represents $R$ is $\tau$-U-refinable:

$$\xymatrix{
    \tau \text{-} \alpha \text{-} \beta \text{-UFR} \ar@{=>}^{\star}[d]       &        \tau \text{-U-} \alpha\text{-HFR} \ar@{=>}^{\dagger}[dr]     &   \tau \text{-} \alpha \text{-HFR} \ar@{=>}_{\star}[l]          &                  &                 \\
\tau \text{-U-} \alpha \text{-} \beta \text{-UFR} \ar@{=>}[ur] \ar@{=>}^{\dagger}[r]  & \tau \text{-U-} \beta \text{-FFR} \ar@{=>}[r] \ar@{=>}[d]  & \tau \text{-U-BFR} \ar@{=>}[r]^{\dagger}& \tau\text{-U-ACCP} \ar@{=>}^{\dagger}[r]& \tau\text{-U-}\alpha\\
 \tau \text{-} \beta \text{-WFFR} \ar@{=>}[r]           & \tau \text{-U-} \beta \text{-WFFR} \ar@{=>}[d]                     &   \tau \text{-} \text{BFR} \ar@{=>}[u]           &  \tau{-} \text{ACCP} \ar@{=>}[u]               &       \tau \text{-} \alpha \ar@{=>}[u]           \\
 \tau\text{-}\alpha \text{-} \beta \text{ df ring}   \ar@{=>}[r]        & \tau\text{-}\text{U-}\alpha \text{-} \beta \text{ df ring} & \tau\text{-}\beta \text{-FFR} \ar@{=>}[uul]
            }$$
\indent We have left off the relations which were proven in \cite[Theorem 4.1]{Mooney}, and focused instead on the rings satisfying the U-finite factorization properties.  Examples given in \cite{Axtell, Axtell2, Frazier, anderson90} show that arrows can neither be reversed nor added to the diagram with a few exceptions. 
\begin{question}Does U-atomic imply atomic?\end{question} 
\indent D.D. Anderson and S. Valdez-Leon show in \cite[Theorem 3.13]{Valdezleon} that if $R$ has a finite number of non-associate irreducibles, then U-atomic and atomic are equivalent.  This remains open in general.  
\\
\begin{question}Does U-ACCP imply ACCP? \end{question}

\indent We can modify M. Axtell's proof of \cite[Theorem 2.9]{Axtell} to add a partial converse to Theorem \ref{thm: tau u relations} (5) if $\tau$ is combinable and associate preserving.  The idea is the same, but slight adjustments are required to adapt it to $\tau$-factorizations and to allow uniqueness up to any type of associate.

\begin{theorem}  Let $\beta \in \{$associate, strongly associate, very strongly associate$\}$.  Let $R$ be a commutative ring with $1$ and let $\tau$ be a symmetric relation on $R^{\#}$ which is both combinable and associate preserving.  $R$ is a $\tau$-U-$\beta$-FFR if and only if $R$ is a $\tau$-U-$\beta$-WFFR.
\end{theorem}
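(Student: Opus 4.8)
The plan is to get one direction for free and adapt Axtell's argument for the other. One implication is already contained in Theorem \ref{thm: tau u relations}(5): every $\tau$-U-$\beta$-FFR is a $\tau$-U-$\beta$-WFFR. For the converse, suppose $R$ is a $\tau$-U-$\beta$-WFFR and fix a non-unit $a\in R$. By hypothesis there are finitely many pairwise non-$\beta$ elements $c_1,\dots,c_k$ such that every essential divisor occurring in any $\tau$-U-factorization of $a$ is $\beta$ to some $c_i$. First I would observe that, up to rearrangement of the essential divisors and $\beta$, a $\tau$-U-factorization of $a$ is recorded by the multiset of $\beta$-classes of its essential divisors; this is precisely where associate preserving is used, since it guarantees that replacing an essential divisor by a $\beta$-equivalent element again yields a $\tau$-U-factorization, so that ``same up to $\beta$'' is well behaved. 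Consequently it suffices to bound, for each $i$, the number $\mu_i$ of essential divisors $\beta$ to $c_i$ that can appear in a single $\tau$-U-factorization of $a$: there are then at most $\prod_{i=1}^k(\mu_i+1)$ factorizations of $a$ up to rearrangement and $\beta$, making $R$ a $\tau$-U-$\beta$-FFR.

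The heart of the proof is the bound on $\mu_i$, which I would establish by contradiction. Assume $\mu_i$ is unbounded. Then for every $n$ there is a $\tau$-U-factorization $a=\lambda a_1\cdots a_s\lceil b_1\cdots b_m\rceil$ in which, after reindexing, $b_1\sim\cdots\sim b_j\sim c_i$ with $j\ge n$; write $Q$ for the product of the remaining essential divisors, so that $(a)=(c_i^{\,j}Q)$. Because each of $b_1,\dots,b_j$ is $\sim c_i$, replacing them by $c_i$ inside principal ideals collapses the U-condition for any one of $b_1,\dots,b_j$ to the single strict containment $(c_i^{\,j}Q)\subsetneq(c_i^{\,j-1}Q)$. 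Next, using that $\tau$ is combinable, I would merge $b_1,\dots,b_{j-1}$ into one $\tau$-factor $B$ with $(B)=(c_i^{\,j-1})$, still producing a $\tau$-factorization of $a$, and rearrange it into a $\tau$-U-factorization via Theorem \ref{thm: rearrange}. The crucial point is that $B$ must survive as an essential divisor of $a$: were it absorbed among the inessential divisors, tracking the rearrangement shows $(a)$ would collapse to $(c_iQ)$, and then the sandwich $(c_i^{\,j}Q)\subseteq(c_i^{\,j-1}Q)\subseteq\cdots\subseteq(c_iQ)=(a)=(c_i^{\,j}Q)$ forces $(c_i^{\,j}Q)=(c_i^{\,j-1}Q)$, contradicting the strict containment above. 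Hence, for every $n$, the element $a$ has a $\tau$-U-factorization with an essential divisor $B$ satisfying $(B)=(c_i^{\,j-1})$ and $j-1\ge n-1$.

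Finally I would run the dichotomy on the powers of $c_i$. If the principal ideals $(c_i),(c_i^{\,2}),(c_i^{\,3}),\dots$ are pairwise distinct, then the essential divisors $B$ just produced fall into infinitely many distinct $\sim$-classes, hence into infinitely many distinct $\beta$-classes, contradicting that $R$ is a $\tau$-U-$\beta$-WFFR. Otherwise the descending chain $(c_i)\supseteq(c_i^{\,2})\supseteq\cdots$ stabilizes; let $p$ be least with $(c_i^{\,p})=(c_i^{\,p+1})$, so $(c_i^{\,m})=(c_i^{\,p})$ for all $m\ge p$. Then no $\tau$-U-factorization of $a$ can have $j\ge p+1$ essential divisors $\beta$ to $c_i$, since the required strict containment $(c_i^{\,j}Q)\subsetneq(c_i^{\,j-1}Q)$ would contradict $(c_i^{\,j})=(c_i^{\,j-1})$; thus $\mu_i\le p$, contradicting unboundedness of $\mu_i$. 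Either branch gives a contradiction, so every $\mu_i$ is finite and the theorem follows.

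The step I expect to be the main obstacle is the claim that the merged block $B$ cannot be discarded when one passes to a U-factorization via Theorem \ref{thm: rearrange}: that theorem only supplies \emph{some} rearrangement, so one must carefully track which of the factors $a_1,\dots,a_s,B,b_j,\dots,b_m$ become inessential, and verify that discarding $B$ would back-propagate to a contradiction with the essentiality of the original $b_\ell$'s. Getting this bookkeeping right — and confirming that combinable and associate preserving are invoked exactly where claimed, including the reduction of the U-conditions to $(c_i^{\,j}Q)\subsetneq(c_i^{\,j-1}Q)$ uniformly for all three choices of $\beta$ — is precisely the ``slight adjustment'' of Axtell's original proof that the statement alludes to.
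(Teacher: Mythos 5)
Your forward direction and the overall reduction are fine, and your route is genuinely different from the paper's: the paper compares two arbitrary $\tau$-U-factorizations of $a$ by recording the sequences of principal ideals generated by their essential divisors, shows (using combinability, symmetry, and Axtell's Lemma 1.3) that factorizations which are not identical up to $\beta$ must have non-comparable sequences, and then quotes the finiteness of pairwise non-comparable sequences from the second claim of Axtell's Theorem 2.9; you instead bound, for each class $c_i$, the multiplicity $\mu_i$ with which it can occur among the essential divisors of a single $\tau$-U-factorization, via the dichotomy on the chain $(c_i)\supseteq (c_i^2)\supseteq\cdots$. That strategy can be made to work, but as written the crucial step has a genuine gap, and it is exactly the one you flagged: after merging $b_1,\dots,b_{j-1}$ into $B$ you appeal to Theorem \ref{thm: rearrange}, and to rule out $B$ becoming inessential you assert that ``$(a)$ would collapse to $(c_iQ)$.'' That collapse only follows if the essential divisors of the rearranged factorization are exactly $b_j,\dots,b_m$. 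Theorem \ref{thm: rearrange} gives no such control: the originally inessential divisors $a_1,\dots,a_s$ may become essential in the rearrangement, and then all you can conclude is $(a)=(a_1\cdots a_s b_j\cdots b_m)$, which is perfectly compatible with the strict containment $(c_i^{j}Q)\subsetneq (c_i^{j-1}Q)$; your sandwich then produces no contradiction, so the claim that $B$ survives as an essential divisor is not established.

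The gap is repairable, and more easily than by bookkeeping the rearrangement: do not rearrange at all. Since $Bb_j\cdots b_m=b_1\cdots b_m$, the bracketing $a=\lambda a_1\cdots a_s\left\lceil B b_j\cdots b_m\right\rceil$ already satisfies the U-conditions verbatim from the original ones: condition (1) for each $a_l$ is $a_l(b_1\cdots b_m)=(b_1\cdots b_m)$, which is the original inessentiality; condition (2) for each $b_l$ with $l\geq j$ is the original essentiality of $b_l$; and condition (2) for $B$ reads $(b_1\cdots b_m)\neq (b_j\cdots b_m)$, which follows from essentiality of $b_1$ because $(b_2\cdots b_m)\subseteq (b_j\cdots b_m)$ and equality would force $(b_1\cdots b_m)=(b_2\cdots b_m)$. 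Combinability together with symmetry makes $\lambda a_1\cdots a_s B b_j\cdots b_m$ a $\tau$-factorization, so you obtain a $\tau$-U-factorization of $a$ with essential divisor $B$ and $(B)=(c_i^{j-1})$, and both branches of your dichotomy then go through. Two minor remarks: your argument never actually seems to use the associate-preserving hypothesis (only combinability and symmetry), and for $\beta$ the very strong associate relation you should note, as the paper does, that $\beta$ is symmetric and transitive, so ``$\beta$-classes'' of essential divisors make sense before you count multisets.
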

\begin{proof} 
($\Rightarrow$) was already shown, so we need only prove the converse.  ($\Leftarrow$) Suppose $R$ is not a $\tau$-U-$\beta$-FFR.  Let $a \in R$ be a non-unit which has infinitely many $\tau$-U-factorizations up to $\beta$.  Let $b_1, b_2, \ldots, b_m$ be a complete list of essential $\tau$-U-divisors of $a$ up to $\beta$.  Let 
$$a=a_1 \cdots a_n \left \lceil c_1 \cdots c_k \right \rceil =a'_1 \cdots a'_{n'} \left \lceil d_1 \cdots d_n \right \rceil$$
be two $\tau$-U-factorizations of $a$ and assume we have re-ordered the essential divisors in both factorizations above so that the $\beta$ of $b_1$ appear first, followed by $\beta$ of $b_2$, etc.  Let $A=\left \langle (c_1), (c_2), \ldots, (c_k) \right \rangle$ and $B=\left \langle (d_1), (d_2), \ldots, (d_n) \right \rangle$ be sequences of ideals.  We call the factorizations \emph{comparable} if $A$ is a subsequence of $B$ or vice versa.
\\
\indent Suppose $A$ is a proper subsequence of $B$
$$B=\left\langle (d_1), \ldots, (d_{i_1})=(c_1), \ldots, (d_{i_2})=(c_2), \ldots, (d_{i_k})=(c_k), \ldots, (d_n) \right \rangle$$ with $n>k$.  Because $\tau$ is combinable and symmetric, 
$$a=a'_1 \cdots a'_{n'} \left \lceil d_{i_1} d_{i_2} \cdots d_{i_k} (d_1 \cdots \widehat{d_{i_1}} \widehat{d_{i_2}} \cdots \widehat{d_{i_k}} \cdots d_n) \right \rceil$$ 
remains a $\tau$-factorizations and \cite[Lemma 1.3]{Axtell} ensures that this remains a U-factorization.  
\\
\indent This yields
$$ (a)=(d_1 \cdots \widehat{d_{i_1}} \widehat{d_{i_2}} \cdots \widehat{d_{i_k}} \cdots d_n)(d_{i_1} d_{i_2} \cdots d_{i_k}) = (d_1 \cdots d_n)=(c_1 \cdots c_k)$$
$$=(c_1) \cdots (c_k)=(d_{i_1}) \cdots (d_{i_k})=(d_{i_1}\cdots d_{i_k}).$$
But then, $(d_1 \cdots \widehat{d_{i_1}} \widehat{d_{i_2}} \cdots \widehat{d_{i_k}} \cdots d_n)$ cannot be an essential divisor, a contradiction, unless $n=k$.
\\
\indent If $n=k$, then the sequences of ideals are identical, and we seek to prove this means the $\tau$-U-factorizations are the same up to $\beta$.  It is certainly true for $\beta=$ associate as demonstrated in \cite[Theorem 2.9]{Axtell}.  So we have a pairing of the $c_i$ and $d_i$ such that $c_i \sim b_j \sim d_i$ for one of the essential $\tau$-U-divisors $b_j$.  We know further that $c_i$ and $b_j$ (resp. $d_i$ and $b_j$) are $\beta$ since $R$ is by assumption a $\tau$-U-$\beta$-WFFR.  
\\
\indent It is well established that $\beta$ is transitive, so we can conclude that this same pairing demonstrates that $c_i$ and $d_i$ are $\beta$, not just associate.  Thus the number of distinct $\tau$-U-factorizations up to $\beta$ is less than or equal to the number of non-comparable finite sequences of elements from the set $\{ (b_1), (b_2), \ldots, (b_m) \}$.
\\
\indent From here we direct the reader to the proof of the second claim in \cite[Theorem 2.9]{Axtell} where it is shown that this set is finite.

\end{proof} 
\section{Direct Products}
\indent For each $i$, $1\leq i \leq N$, let $R_i$ be commutative rings with $\tau_i$ a symmetric relation on $R_i^{\#}$.  We define a relation $\tau_\times$ on $R=R_1 \times \cdots \times R_N$ which preserves many of the theorems about direct products from \cite{Valdezleon3} for $\tau$-factorizations.  Let $(a_i),(b_i) \in R^{\#}$, then $(a_i) \tau_\times (b_i)$ if and only if whenever $a_i$ and $b_i$ are both non-units in $R_i$, then $a_i \tau_i b_i$.  
\\
\indent For convenience we will adopt the following notation: Suppose $x\in R_i$, then $x^{(i)}=(1_{R_1}, \cdots, 1_{R_{i-1}}, x, 1_{R_{i+1}}, \cdots 1_{R_N})$. so $x$ appears in the $i^{\text{th}}$ coordinate, and all other entries are the identity.  Thus for any $(a_i)\in R$, we have $(a_i)=a_1^{(1)}a_2^{(2)}\cdots a_n^{(n)}$ is a $\tau_\times$-factorization.  We will always move any $\tau_\times$-factors which may become units in this process to the front and collect them there.

\begin{lemma} \label{lem: prod rel} Let $R=R_1 \times \cdots \times R_N$ for $N \in \N$.  Then $(a_i) \sim (b_i)$ (resp. $(a_i) \approx (b_i)$) if and only if $a_i \sim b_i$ (resp. $a_i \approx b_i$) for every $i$.  Furthermore, $(a_i) \cong (b_i)$ implies $a_i \cong b_i$ for all $i$, and for $a_i,b_i$ all non-zero, $a_i \cong b_i$ for all $i$ $\Rightarrow (a_i) \cong (b_i)$.
\end{lemma}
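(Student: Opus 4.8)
The plan is to reduce all three claims to two elementary structural facts about a finite direct product $R = R_1 \times \cdots \times R_N$: first, the principal ideal generated by $(a_1, \ldots, a_N)$ is the product ideal $(a_1) \times \cdots \times (a_N)$, and two such product ideals coincide precisely when their factors agree coordinatewise; second, $U(R) = U(R_1) \times \cdots \times U(R_N)$. I would state these at the outset and then handle $\sim$, $\approx$, $\cong$ in turn.

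For $\sim$, the relation $(a_i) \sim (b_i)$ unravels to $(a_1) \times \cdots \times (a_N) = (b_1) \times \cdots \times (b_N)$, which by the first fact holds iff $(a_k) = (b_k)$ for every $k$, i.e. $a_k \sim b_k$ for every $k$. For $\approx$, write a prospective unit of $R$ as $\lambda = (\lambda_1, \ldots, \lambda_N)$; then $(a_i) = \lambda (b_i)$ is exactly the system $a_k = \lambda_k b_k$, and by the second fact $\lambda \in U(R)$ iff each $\lambda_k \in U(R_k)$, so $(a_i) \approx (b_i)$ iff $a_k \approx b_k$ for every $k$ (for the converse one collects the coordinate units into a single unit of $R$).

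For the forward direction of $\cong$, suppose $(a_i) \cong (b_i)$. Part (1) of the definition gives $(a_i) \sim (b_i)$, hence $a_k \sim b_k$ for all $k$ by the $\sim$ case; in particular each coordinate pair is either both zero, in which case $a_k \cong b_k$ trivially, or both nonzero. Fix $k_0$ with $a_{k_0}, b_{k_0}$ nonzero and suppose $a_{k_0} = r\, b_{k_0}$ for some $r \in R_{k_0}$; I must show $r \in U(R_{k_0})$. For each $j \neq k_0$, since $a_j \sim b_j$ choose $s_j \in R_j$ with $a_j = s_j b_j$, and set $\rho = (s_1, \ldots, s_{k_0-1}, r, s_{k_0+1}, \ldots, s_N)$, so that $\rho\,(b_i) = (a_i)$. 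Since $(a_i) \neq 0$ (its $k_0$-th coordinate is nonzero), part (2) of $(a_i) \cong (b_i)$ forces $\rho \in U(R)$, whence $r \in U(R_{k_0})$, i.e. $a_{k_0} \cong b_{k_0}$. For the converse under the hypothesis that all $a_i, b_i$ are nonzero: from $a_k \cong b_k$ we get $a_k \sim b_k$, so $(a_i) \sim (b_i)$ by the $\sim$ case, giving part (1); and $(a_i) \neq 0$, so for part (2) it suffices that $\rho\,(b_i) = (a_i)$ forces $\rho \in U(R)$, which is clear since coordinatewise $r_k b_k = a_k$ with $a_k \cong b_k$ and $a_k \neq 0$ yields $r_k \in U(R_k)$.

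The only genuinely delicate point is the forward $\cong$ direction: one cannot simply take $\rho$ to be $r$ in coordinate $k_0$ and $1$ elsewhere, because off $k_0$ one only knows $a_j \sim b_j$, not $a_j = b_j$; the remedy is to absorb that discrepancy by using multipliers $s_j$ witnessing $a_j \in (b_j)$, so that $\rho\,(b_i)$ is exactly $(a_i)$ and part (2) of $\cong$ in $R$ can be invoked. I would also record why the all-nonzero hypothesis cannot be dropped in the converse: if some $b_j = 0$, then the $j$-th coordinate of a candidate $\rho$ with $\rho\,(b_i) = (a_i)$ is entirely unconstrained and may be a non-unit, so $(a_i) \cong (b_i)$ can fail even though $a_k \cong b_k$ for every $k$.
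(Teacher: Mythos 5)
Your proof is correct. Note, though, that the paper does not prove this lemma at all: it simply cites Theorem 2.15 of Anderson and Valdes-Leon, \emph{Factorization in commutative rings with zero divisors}, so your self-contained argument replaces an external reference rather than an internal proof. Your route is the natural one and matches the content of the cited result: the $\sim$ and $\approx$ cases reduce to $\bigl((a_1,\ldots,a_N)\bigr)=(a_1)\times\cdots\times(a_N)$ and $U(R)=U(R_1)\times\cdots\times U(R_N)$, and the only delicate step is the forward $\cong$ direction, where you correctly avoid the trap of padding the multiplier with $1$'s by instead choosing witnesses $s_j$ with $a_j=s_jb_j$ in the other coordinates so that $\rho(b_i)=(a_i)$ exactly, which lets you invoke condition (2) of $\cong$ in $R$ (legitimate because the distinguished coordinate is nonzero, so $(a_i)\neq 0$ and the first disjunct of (2) is excluded). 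Your closing remark about why the all-nonzero hypothesis is needed in the converse is exactly the phenomenon the paper records in its Example 5.2 with $(0,1)=(0,1)(0,1)$ in $\Z\times\Z$, so your treatment is consistent with, and slightly more informative than, what the paper provides.
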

\begin{proof} See \cite[Theorem 2.15]{Valdezleon}.
\end{proof}

\begin{example}If $a_{i_0}=0$ for even one index $1 \leq i_0 \leq N$, then $a_i \cong b_i$ for all $i$ need not imply $(a_i) \cong (b_i)$. \end{example}
Consider the ring $R=\Z \times \Z$, with $\tau_i=\Z^{\#}\times \Z^{\#}$ for $i=1,2$, the usual factorization.  We have $1\cong 1$ and $0 \cong 0$ since $\Z$ is a domain; however $(0,1)=(0,1)(0,1)$ shows $(0,1) \not \cong (0,1)$.

\begin{lemma}\label{lem: one nonunit} Let $R=R_1 \times \cdots \times R_N$ for $N \in \N$ with $\tau_i$ a symmetric relation on $R_i^{\#}$ for each $i$.  Let $\alpha \in \{$ irreducible, strongly irreducible, m-irreducible, very strongly irreducible$\}$.  If $(a_i) \in R$ is $\tau$-$\alpha$, then precisely one coordinate is not a unit.
\end{lemma}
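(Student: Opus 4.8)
The plan is to argue by contradiction. Since $(a_i)$ is $\tau_\times$-$\alpha$ it is in particular a non-unit of $R$, so at least one coordinate $a_{i_0}$ fails to be a unit in $R_{i_0}$; the task is to rule out the possibility that a second coordinate $a_{j_0}$, with $j_0 \neq i_0$, is also a non-unit. Assuming such a $j_0$ exists, the strategy is to exhibit a non-trivial $\tau_\times$-factorization of $(a_i)$ that contradicts each of the four irreducibility hypotheses.

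First I would peel off the $i_0$-th coordinate: set $c = a_{i_0}^{(i_0)}$ and let $d$ be the element whose $i_0$-th coordinate is $1_{R_{i_0}}$ and whose $k$-th coordinate is $a_k$ for every $k \neq i_0$, so that $(a_i) = c\,d$. Both $c$ and $d$ lie in $R^{\#}$: neither is zero, since each has a coordinate equal to $1$; $c$ is a non-unit because its $i_0$-th coordinate $a_{i_0}$ is a non-unit; and $d$ is a non-unit because its $j_0$-th coordinate $a_{j_0}$ is a non-unit. (This is exactly the point at which the second bad coordinate is used, and it is what makes the factorization non-trivial rather than a trivial $(a_i) = \lambda(a_i)$.) Moreover $c \,\tau_\times\, d$ holds \emph{vacuously}: in coordinate $i_0$ we have $d_{i_0} = 1$, a unit, and in every coordinate $k \neq i_0$ we have $c_k = 1$, a unit, so there is no index at which both entries are non-units, and the defining condition for $\tau_\times$ imposes nothing. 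Hence $(a_i) = 1 \cdot c \cdot d$ is a genuine non-trivial $\tau_\times$-factorization.

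Next I would run this factorization against each of the four cases. If $\alpha$ is very strongly irreducible we are done immediately, since $(a_i)$ would then have no non-trivial $\tau_\times$-factorization. In the remaining three cases the defining property forces $(a_i) \sim c$ or $(a_i) \sim d$: for strongly irreducible one gets $(a_i) \approx c$ or $(a_i) \approx d$, each of which implies the corresponding $\sim$; for m-irreducible one gets both $(a_i) \sim c$ and $(a_i) \sim d$. Now apply Lemma \ref{lem: prod rel}: $(a_i) \sim c$ gives $a_k \sim c_k = 1_{R_k}$, hence $(a_k) = R_k$ and $a_k \in U(R_k)$, for every $k \neq i_0$, contradicting that $a_{j_0}$ is a non-unit; symmetrically $(a_i) \sim d$ gives $a_{i_0} \sim d_{i_0} = 1_{R_{i_0}}$, so $a_{i_0} \in U(R_{i_0})$, again a contradiction. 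Either way we reach the desired contradiction, so exactly one coordinate of $(a_i)$ is a non-unit.

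The argument is short, and the only place that needs care is the verification that the pair $(c,d)$ is a legitimate input to the definition of a $\tau_\times$-factorization, i.e.\ that $c, d \in R^{\#}$ (non-zero as well as non-unit) and that $\tau_\times$ actually relates them. The first point is handled by the two non-unit coordinates together with the padding $1$'s, and the second is the vacuous-satisfaction observation above, which is really the crux of why the direct-product relation $\tau_\times$ behaves well. I expect no genuine obstacle beyond dispatching the four notions of irreducibility uniformly, and that reduces to the already-recorded implications among them, namely that very strongly irreducible implies strongly irreducible implies irreducible, and that m-irreducible implies irreducible.
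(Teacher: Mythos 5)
Your proof is correct and follows essentially the same route as the paper: the paper also splits $(a_i)$ into $a_{i_0}^{(i_0)}$ times the complementary factor with $1_{R_{i_0}}$ in slot $i_0$, notes this is a $\tau_\times$-factorization, and derives a contradiction because $(a_i)$ is not associate to either factor. You merely spell out more explicitly the vacuous $\tau_\times$-relatedness, the membership in $R^{\#}$, and the case analysis over the four notions of $\alpha$, all of which the paper leaves implicit.
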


\begin{proof} Let $a=(a_i)\in R$ be a non-unit which is $\tau_\times$-$\alpha$.  Certainly not all coordinates can be units, or else $a \in U(R)$. Suppose for a moment there were at least two coordinates for which $a_i$ is not a unit in $R_i$.  After reordering, we may assume $a_1$ and $a_2$ are not units.  Then $a=a_1^{(1)}(1_{R_1}, a_2, \cdots , a_N)$ is a $\tau_\times$-factorization.  But $a$ is not even associate to either $\tau_\times$-factor, a contradiction.
\end{proof}
\begin{theorem} \label{thm: dir. prod.} Let $R=R_1 \times \cdots \times R_N$ for $N \in \N$ with $\tau_i$ a symmetric relation on $R_i^{\#}$ for each $i$.  
\\
(1) A non-unit $(a_i) \in R$ is $\tau_\times$-atomic (resp. strongly atomic) if and only if $a_{i_0}$ is $\tau_{i_0}$-atomic (resp. strongly atomic) for some $1\leq i_0 \leq n$ and $a_i \in U(R_i)$ for all $i \neq i_0$.
\\
(2) A non-unit $(a_i) \in R$ is $\tau_\times$-m-atomic if and only if $a_{i_0}$ is $\tau_{i_0}$-m-atomic for some $1\leq i_0 \leq n$ and $a_i \in U(R_i)$ for all $i \neq i_0$.
\\
(3) A non-unit $(a_i) \in R$ is $\tau_\times$-very strongly atomic if and only if $a_{i_0}$ is $\tau_{i_0}$-very strongly atomic and non-zero for some $1\leq i_0 \leq n$ and $a_i \in U(R_i)$ for all $i \neq i_0$.
\end{theorem}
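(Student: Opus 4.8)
The plan is to prove the three equivalences in parallel, since they have the same shape: by Lemma \ref{lem: one nonunit} a $\tau_\times$-$\alpha$ element of $R$ has a single non-unit coordinate $i_0$, and all the factorization content then lives in $R_{i_0}$. In particular the clause ``$a_i\in U(R_i)$ for $i\neq i_0$'' in the forward direction of each part is immediate from Lemma \ref{lem: one nonunit}, so the real work is transporting $\tau_{i_0}$-$\alpha$-ness of $a_{i_0}$ back and forth across the equivalence, with Lemma \ref{lem: prod rel} as the engine for the transport.

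For the forward direction of (1) and (2), suppose $(a_i)$ is $\tau_\times$-$\alpha$ with unique non-unit coordinate $i_0$. Given a $\tau_{i_0}$-factorization $a_{i_0}=\lambda c_1\cdots c_k$ in $R_{i_0}$, I would lift it to $(a_i)=\mu\, c_1^{(i_0)}\cdots c_k^{(i_0)}$, where $\mu\in U(R)$ has $i_0$-entry $\lambda$ and all other entries equal to the corresponding unit coordinates of $(a_i)$. This is a $\tau_\times$-factorization because the only coordinate in which two distinct factors $c_j^{(i_0)}, c_l^{(i_0)}$ are simultaneously non-units is $i_0$, where $c_j\,\tau_{i_0}\,c_l$ holds by hypothesis. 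Since $(a_i)$ is $\tau_\times$-$\alpha$ it bears the appropriate associate-type relation to one of the factors ($\sim$ with some $j$ for atomic, $\approx$ with some $j$ for strongly atomic, $\sim$ with every $j$ for m-atomic), and projecting this relation to the $i_0$-coordinate via Lemma \ref{lem: prod rel} gives the same relation between $a_{i_0}$ and the $c_j$; hence $a_{i_0}$ is $\tau_{i_0}$-atomic (resp.\ strongly atomic, m-atomic). For part (3), $(a_i)\cong(a_i)$ forces $a_{i_0}\cong a_{i_0}$ by Lemma \ref{lem: prod rel}, and it also forces $a_{i_0}\neq 0$: if $a_{i_0}=0$ then the non-unit $r=0^{(i_0)}$ satisfies $r(a_i)=(a_i)$, contradicting $(a_i)\cong(a_i)$. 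A non-trivial $\tau_{i_0}$-factorization of $a_{i_0}$ would lift exactly as above to a non-trivial $\tau_\times$-factorization of $(a_i)$, so $a_{i_0}$ has none; thus $a_{i_0}$ is $\tau_{i_0}$-very strongly atomic and non-zero.

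For the reverse direction, assume $a_{i_0}$ is $\tau_{i_0}$-$\alpha$ and $a_i\in U(R_i)$ for $i\neq i_0$, and let $(a_i)=\mu\prod_{j=1}^{k}(c_j)$ be a $\tau_\times$-factorization with each $(c_j)\in R^{\#}$. The crucial observation is that a non-unit coordinate of any factor $(c_j)$ would be a non-unit coordinate of the product $(a_i)$; since the only such coordinate of $(a_i)$ is $i_0$, every $(c_j)$ is a unit in all coordinates other than $i_0$ and, being a non-unit of $R$, is a non-unit of $R_{i_0}$. Projecting to $R_{i_0}$ gives $a_{i_0}=\mu_{i_0}\prod_j(c_j)_{i_0}$, which is a $\tau_{i_0}$-factorization (the pairwise $\tau_{i_0}$ condition being built into the definition of $\tau_\times$), unless some $(c_j)_{i_0}=0$, which, since $0$ is $\tau_{i_0}$-related to no element, forces $k=1$, $a_{i_0}=0$, and $(a_i)\sim(c_1)$ at once. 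Applying $\tau_{i_0}$-$\alpha$-ness of $a_{i_0}$ to the projected factorization and lifting the resulting relation with Lemma \ref{lem: prod rel} (the coordinates other than $i_0$ cause no trouble because any two units of a ring are associate and strongly associate) yields the required relation between $(a_i)$ and some $(c_j)$, or with every $(c_j)$ in the m-atomic case, which is (1) and (2). For (3), very strong atomicity of $a_{i_0}$ forces the projected factorization to be trivial, so $k=1$ and the original factorization is trivial, while $(a_i)\cong(a_i)$ follows from $a_{i_0}\cong a_{i_0}$, $a_{i_0}\neq 0$, and the unit coordinates via the converse half of Lemma \ref{lem: prod rel}.

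The main difficulty I anticipate is organizational rather than conceptual: keeping careful track of units when passing between $R$ and $R_{i_0}$, that is, absorbing $\lambda$ and stray unit coordinates into one unit factor, confirming that the surviving non-unit factors still pairwise satisfy $\tau_{i_0}$, and disposing of the degenerate case where a factor has zero $i_0$-coordinate. In part (3) there is the additional point of checking that the non-zero hypothesis is exactly what is needed: it enters only through the converse direction of Lemma \ref{lem: prod rel} for $\cong$, and the Example immediately preceding Lemma \ref{lem: one nonunit} shows it cannot be dropped.
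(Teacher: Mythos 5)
Your proof is correct and follows essentially the same route as the paper's: invoke Lemma \ref{lem: one nonunit} to isolate the single non-unit coordinate, lift and project factorizations between $R$ and $R_{i_0}$, and transport the associate-type relations with Lemma \ref{lem: prod rel}. The only substantive difference is that you explicitly dispose of the degenerate case where a factor has zero $i_0$-coordinate in the reverse direction (correctly observing that $0\notin R_{i_0}^{\#}$ forces the factorization to be trivial), a point the paper's proof passes over silently.
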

\begin{proof} (1) ($\Rightarrow$) Let $a=(a_i) \in R$ be a non-unit which is $\tau_\times$-atomic (resp. strongly atomic).  By Lemma \ref{lem: one nonunit}, there is only one non-unit coordinate.  Suppose after reordering if necessary that $a_1$ is the non-unit.  If $a_1$ were not $\tau_1$-atomic (resp. strongly atomic), then there is a $\tau_1$-factorization, $\lambda_{1_1}a_{1_1}a_{1_2}\cdots a_{1_k}$ for which $a_1 \not \sim a_{1_j}$ (resp. $a_1 \not \approx a_{1_j}$) for any $1 \leq j \leq k$.  But then
$$(a_i)=(\lambda_{1_1},a_2, \ldots, a_n)a_{1_1}^{(1)}a_{1_2}^{(1)}\cdots a_{1_k}^{(1)}$$ 
is a $\tau_\times$-factorization.  Furthermore, by Lemma \ref{lem: prod rel} $(a_i) \not \sim a_{1_j}^{(1)}$ (resp. $(a_i) \not \sim a_{1_j}^{(1)}$) for all $1 \leq j \leq k$.  This would contradict the assumption that $a$ was $\tau_\times$-atomic (resp. strongly atomic).
\\
\indent ($\Leftarrow$) Let $a_1\in R_1$, a non-unit with $a_1$ being $\tau_1$-atomic (resp. strongly atomic).  Let $\mu_i \in U(R_i)$ for $2 \leq i \leq N$.  We show $a=(a_1, \mu_2, \cdots \mu_N)$ is $\tau_\times$-atomic (resp. strongly atomic).  Suppose $a=(\lambda_1, \ldots, \lambda_N)(a_{1_1}, \ldots, a_{1_N})\cdots (a_{k_1}, \ldots, a_{k_N})$ is a $\tau_\times$-factorization of $a$.  We first note $a_{i_j}\in U(R_j)$ for all $j\geq 2$.  Furthermore, this means $a_{i_1}$ is not a unit in $R_1$ for $1\leq i \leq k$, otherwise we would have units as factors in a $\tau_\times$ factorization.  This means $a_1=\lambda_1 a_{1_1} \cdots a_{k_1}$ is a $\tau_1$ factorization of a $\tau_1$-atomic (resp. strongly atomic) element.  Thus, we must have $a_1 \sim a_{j_1}$ (resp. $a_1 \approx a_{j_1}$) for some $1\leq j \leq k$.  Hence by Lemma \ref{lem: prod rel}, we have $a \sim (a_{j_1}, \ldots, a_{j_N})$ (resp. $a \approx (a_{j_1}, \ldots, a_{j_N})$ for some $1\leq j \leq k$ and $a$ is $\tau_\times$ atomic (resp. strongly atomic) as desired.
\\
\indent (2) ($\Rightarrow$) Let $a=(a_i) \in R$ be a non-unit which is $\tau_\times$-m-atomic.  By Lemma \ref{lem: one nonunit}, there is only one non-unit coordinate, say $a_1$ after reordering if necessary.  Let $a_1=\lambda_{1_1}a_{1_1}a_{1_2}\cdots a_{1_k}$ be a $\tau_1$ factorization for which $a_1 \not \sim a_{1_{j_0}}$ for at least one $1 \leq j_0 \leq k$.  But then
$$(a_i)=(\lambda_{1_1},a_2, \ldots, a_n)a_{1_1}^{(1)}a_{1_2}^{(1)}\cdots a_{1_k}^{(1)}$$ 
is a $\tau_\times$-factorization of $a$ for which (by Lemma \ref{lem: prod rel}) $a=(a_i) \not \sim a_{1_{j_0}}^{(1)}$.  This contradicts the hypothesis that $a$ is $\tau_\times$-m-atomic.
\\
\indent ($\Leftarrow$) Let $a_1\in R_1$, a non-unit with $a_1$ being $\tau_1$-m-atomic.  Let $\mu_i \in U(R_i)$ for $2 \leq i \leq N$.  We show $a=(a_1, \mu_2, \cdots \mu_N)$ is $\tau_\times$-m-atomic.  Suppose 
$$a=(\lambda_1, \ldots, \lambda_N)(a_{1_1}, \ldots, a_{1_N})\cdots (a_{k_1}, \ldots, a_{k_N})$$
 is a $\tau_\times$-factorization of $a$.  We first note $a_{i_j}\in U(R_j)$ for all $j\geq 2$.  As before, this means $a_1=\lambda_1 a_{1_1} \cdots a_{k_1}$ is a $\tau_1$ factorization of a $\tau_1$-m-atomic element.  Hence $a_1 \sim a_{j_1}$ for each $1\leq j \leq k$.  By Lemma \ref{lem: prod rel} we have $a \sim (a_{j_1}, \ldots, a_{j_N})$ for all $1\leq j \leq k$ and thus $a$ is $\tau_\times$-m-atomic as desired.
\\
\indent (3) ($\Rightarrow$) Let $a=(a_1, \ldots a_N)$ be a non-unit which is $\tau_\times$-very strongly atomic.  By Lemma \ref{lem: one nonunit}, we may assume $a_1$ is the non-unit, and $a_j$ is a unit for $j \geq 2$.  We suppose for a moment that $a_1=0_1$.  But then $(0, a_2, \ldots a_N)= (0,1, \ldots 1)\cdot(0, a_2, \ldots a_N)$ shows that $a \not \cong a$, a contradiction.  Lemma \ref{lem: prod rel} shows that if $a \cong a$, then $a_i \cong a_i$ for each $1\leq i \leq N$.  Hence, if $a_1$ were not $\tau_1$-very strongly atomic, then there is a $\tau_1$-factorization, $\lambda_{1_1}a_{1_1}a_{1_2}\cdots a_{1_k}$ for which $a_1 \not \cong a_{1_j}$ for any $1 \leq j \leq k$.  But then
$$(a_i)=(\lambda_{1_1},a_2, \ldots, a_n)a_{1_1}^{(1)}a_{1_2}^{(1)}\cdots a_{1_k}^{(1)}$$ 
is a $\tau_\times$-factorization.  Furthermore, since every coordinate is non-zero, by Lemma \ref{lem: prod rel} $(a_i) \not \cong a_{1_j}^{(1)}$ for all $1 \leq j \leq k$.  This would contradict the assumption that $a$ was $\tau_\times$-very strongly atomic.
\\
\indent ($\Leftarrow$) Let $a_1 \in R_1^{\#}$ be $\tau_1$-very strongly atomic.  Let $\mu_i \in U(R_i)$ for $2 \leq i \leq N$.  We show $a=(a_1, \mu_2, \cdots \mu_N)$ is $\tau_\times$-very strongly atomic.  We first check $a\cong a$.  By definition of $\tau_1$-very strongly atomic, $a_1 \cong a_1$.  Certainly as units, we have $\mu_i \cong \mu_i$ for each $i \geq 2$.  Lastly, all of these are non-zero, so we may apply Lemma \ref{lem: prod rel} to see that $a\cong a$.  Suppose $a=(\lambda_1, \ldots, \lambda_N)(a_{1_1}, \ldots, a_{1_N})\cdots (a_{k_1}, \ldots, a_{k_N})$ is a $\tau_\times$-factorization of $a$.  We first note $a_{i_j}\in U(R_j)$ for all $j\geq 2$.  As before, this means $a_1=\lambda_1 a_{1_1} \cdots a_{k_1}$ is a $\tau_1$ factorization of a $\tau_1$-very strongly atomic element.  Hence $a_1 \cong a_{j_1}$ for some $1\leq j \leq k$.  By Lemma \ref{lem: prod rel} we have $a \cong (a_{j_1}, \ldots, a_{j_N})$ and thus $a$ is $\tau_\times$-very strongly atomic as desired.
\end{proof}

\begin{lemma}\label{lem: dir. prod.}Let $R=R_1 \times \cdots \times R_N$ for $N \in \N$ with $\tau_i$ a symmetric relation on $R_i^{\#}$.  Let $\alpha \in \{$irreducible, strongly irreducible, m-irreducible, very strongly irreducible $\}$.  Then we have the following.
\\
(1) If $a=\lambda a_1 \cdots a_n \left\lceil b_1 \cdots b_m\right\rceil$ is a $\tau_i$-U-$\alpha$-factorization of some non-unit $a\in R_i$, then $a^{(i)}=\lambda^{(i)} a_1^{(i)} \cdots a_n^{(i)} \left\lceil b_1^{(i)} \cdots b_m^{(i)}\right\rceil$ is a $\tau_\times$-U-$\alpha$-factorization.  
\\
(2) Conversely, let $a_{i_0}\in R_{i_0}$ be a non-unit and $\mu_i \in U(R_i)$ for all $i \neq i_0$.  Let 
$$(\mu_1, \mu_2, \ldots, \mu_{i_0-1}, a_{i_0},\mu_{i_0+1} \ldots, \mu_N)=(\lambda_i)(a_{1_i})(a_{2_i})\cdots (a_{n_i})\left\lceil (b_{1_i})(b_{2_i})\cdots (b_{m_i})\right\rceil$$
 be a $\tau_\times$-U-$\alpha$-factorization.  Then 
$$a_{i_0}=\lambda_{i_0} a_{1_{i_0}} \cdots a_{n_{i_0}} \left \lceil b_{1_{i_0}} \cdots b_{_{i_0}}\right \rceil$$
 is a $\tau_{i_0}$-U-$\alpha$-factorization.
\end{lemma}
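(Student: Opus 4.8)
The plan is to reduce both directions to the coordinatewise behaviour of principal ideals in a finite product. First I would record the elementary bookkeeping: for $(x_i),(y_i)\in R$ one has, as principal ideals, $((x_i))=(x_1)\times\cdots\times(x_N)$ and $((x_i))((y_i))=((x_iy_i))$, so in particular $(x^{(i)})=R_1\times\cdots\times(x)\times\cdots\times R_N$ with $(x)$ sitting in the $i$-th slot, and products of elements of the form $z^{(i)}$ stay in the $i$-th slot. Since every ideal of $R$ is a product of ideals of the $R_l$, an equality, strict containment, or non-equality of principal ideals of $R$ may be tested one coordinate at a time; and $(x_i)$ is a non-unit of $R$ exactly when some $x_l$ is a non-unit of $R_l$. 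These observations will do essentially all the work, together with Theorem~\ref{thm: dir. prod.}.

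\emph{Part (1).} Each $a_j^{(i)}$ and $b_k^{(i)}$ is a non-unit of $R$ because its $i$-th coordinate lies in $R_i^{\#}$, and $\lambda^{(i)}a_1^{(i)}\cdots a_n^{(i)}b_1^{(i)}\cdots b_m^{(i)}=(\lambda a_1\cdots a_nb_1\cdots b_m)^{(i)}=a^{(i)}$. For the $\tau_\times$-factorization property I note that for $x,y\in R_i^{\#}$ the only coordinate in which both $x^{(i)}$ and $y^{(i)}$ are non-units is the $i$-th, so by the definition of $\tau_\times$, $x\,\tau_i\,y$ forces $x^{(i)}\,\tau_\times\,y^{(i)}$; applying this to the pairs among $a_1,\dots,a_n,b_1,\dots,b_m$ shows $\lambda^{(i)}a_1^{(i)}\cdots a_n^{(i)}b_1^{(i)}\cdots b_m^{(i)}$ is a $\tau_\times$-factorization. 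Each of the two U-factorization conditions for $a^{(i)}$ is an (in)equality of principal ideals of $R$ whose two sides agree (both are $R_l$) in every coordinate $l\neq i$ and which in the $i$-th coordinate is exactly the corresponding U-factorization condition for $a=\lambda a_1\cdots a_n\lceil b_1\cdots b_m\rceil$; since the latter holds, so do both. Finally each essential divisor $b_k^{(i)}$ has its unique non-unit coordinate at $i$, where it is $\tau_i$-$\alpha$, so the reverse direction of the appropriate part of Theorem~\ref{thm: dir. prod.} shows $b_k^{(i)}$ is $\tau_\times$-$\alpha$ (for $\alpha$ very strongly atomic one also uses that $b_k\in R_i^{\#}$ is nonzero, as Theorem~\ref{thm: dir. prod.}(3) requires). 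Hence $a^{(i)}=\lambda^{(i)}a_1^{(i)}\cdots a_n^{(i)}\lceil b_1^{(i)}\cdots b_m^{(i)}\rceil$ is a $\tau_\times$-U-$\alpha$-factorization.

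\emph{Part (2).} Projecting the given $\tau_\times$-U-$\alpha$-factorization onto the $i_0$-th coordinate immediately yields the displayed identity $a_{i_0}=\lambda_{i_0}a_{1_{i_0}}\cdots a_{n_{i_0}}b_{1_{i_0}}\cdots b_{m_{i_0}}$; what then needs checking is that it is a $\tau_{i_0}$-U-$\alpha$-factorization. The one genuinely new step --- and the main obstacle --- is to exploit that all coordinates of $a$ other than $i_0$ are units: for $l\neq i_0$ the identity $\mu_l=\lambda_l a_{1_l}\cdots a_{n_l}b_{1_l}\cdots b_{m_l}$ expresses a unit of $R_l$ as a product, forcing $\lambda_l$ and every $a_{j_l}$ and $b_{k_l}$ to be units of $R_l$. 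From this I would draw three conclusions. First, each factor $(a_{j_i})$ and $(b_{k_i})$, being a non-unit of $R$ all of whose coordinates away from $i_0$ are units, is a non-unit precisely at $i_0$; hence $a_{j_{i_0}},b_{k_{i_0}}\in R_{i_0}^{\#}$ and $\lambda_{i_0}\in U(R_{i_0})$. Second, reading each of the relations $(a_{j_i})\,\tau_\times\,(a_{k_i})$, $(a_{j_i})\,\tau_\times\,(b_{k_i})$, $(b_{j_i})\,\tau_\times\,(b_{k_i})$ in coordinate $i_0$, where both entries are non-units, gives $a_{j_{i_0}}\,\tau_{i_0}\,a_{k_{i_0}}$ and the analogous relations, so $\lambda_{i_0}a_{1_{i_0}}\cdots a_{n_{i_0}}b_{1_{i_0}}\cdots b_{m_{i_0}}$ is a $\tau_{i_0}$-factorization. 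Third, the first U-condition projects coordinatewise to $a_{j_{i_0}}(b_{1_{i_0}}\cdots b_{m_{i_0}})=(b_{1_{i_0}}\cdots b_{m_{i_0}})$, while the second U-condition is an inequality of principal ideals of $R$ that must fail in some coordinate but holds with both sides the unit ideal in every coordinate $\neq i_0$, so it is necessarily witnessed at $i_0$, giving $b_{k_{i_0}}(b_{1_{i_0}}\cdots\widehat{b_{k_{i_0}}}\cdots b_{m_{i_0}})\neq(b_{1_{i_0}}\cdots\widehat{b_{k_{i_0}}}\cdots b_{m_{i_0}})$. Lastly, each $(b_{k_i})$ is $\tau_\times$-$\alpha$, so by Theorem~\ref{thm: dir. prod.} its unique non-unit coordinate --- which we have identified as $i_0$ --- is $\tau$-$\alpha$, that is, $b_{k_{i_0}}$ is $\tau_{i_0}$-$\alpha$. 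Assembling these facts gives that $a_{i_0}=\lambda_{i_0}a_{1_{i_0}}\cdots a_{n_{i_0}}\lceil b_{1_{i_0}}\cdots b_{m_{i_0}}\rceil$ is a $\tau_{i_0}$-U-$\alpha$-factorization, which completes the plan.
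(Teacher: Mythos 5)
Your proposal is correct and follows essentially the same route as the paper: reduce everything to coordinatewise statements about principal ideals in the product (the paper cites Lemma~\ref{lem: prod rel} for this), read off the $\tau_\times$/$\tau_{i_0}$ relations and the two U-conditions one coordinate at a time, and invoke Theorem~\ref{thm: dir. prod.} for the $\alpha$-property of the essential divisors. You simply spell out several steps the paper leaves as ``it is easy to see,'' including the nonvanishing hypothesis needed for the very strongly irreducible case of Theorem~\ref{thm: dir. prod.}(3).
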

\begin{proof} (1) Let $a=\lambda a_1 \cdots a_n \left\lceil b_1 \cdots b_m\right\rceil$ be a $\tau_i$-U-$\alpha$-factorization of some non-unit $a\in R_i$.  It is easy to see that $a^{(i)}=\lambda^{(i)} a_1^{(i)} \cdots a_n^{(i)} \left\lceil b_1^{(i)} \cdots b_m^{(i)}\right\rceil$ is a $\tau_\times$-factorization.  Furthermore, $b_j \neq 0$ for all $1 \leq j \leq $ or else it would not be a $\tau_i$-factorization.  Hence by Theorem \ref{thm: dir. prod.} $b_j^{(i)}$ is $\tau_\times$-$\alpha$ for each $1\leq j \leq m$.  Thus it suffices to show that we actually have a U-factorization.  
\\
\indent Since $a=\lambda a_1 \cdots a_n \left\lceil b_1 \cdots b_m\right\rceil$ is a U-factorization, we know $a_k(b_1 \cdots b_m)=(b_1 \cdots b_m)$ for all $1 \leq k \leq n$.  In the other coordinates, we have $(1_{R_j})=(1_{R_j})$ for all $j\neq i$.  Hence, we apply Lemma \ref{lem: prod rel} and see that this implies that $a_k^{(i)}(b_1^{(i)} \cdots b_m^{(i)})=(b_1^{(i)} \cdots b_m^{(i)})$ for all $1 \leq k \leq n$.  Similarly we have $b_j(b_1 \cdots \widehat{b_j} \cdots b_m)\neq (b_1 \cdots \widehat{b_j} \cdots b_m)$ which implies $b_j^{(i)}(b_1^{(i)} \cdots \widehat{b_j^{(i)}} \cdots b_m^{(i)})\neq (b_1^{(i)} \cdots \widehat{b_j^{(i)}} \cdots b_m^{(i)})$, so this is indeed a U-factorization.
\\
\indent (2) Let 
$$(\mu_1, \mu_2, \ldots, \mu_{i_0-1}, a_{i_0},\mu_{i_0+1} \ldots, \mu_N)=(\lambda_i)(a_{1_i})(a_{2_i})\cdots (a_{n_i})\left\lceil (b_{1_i})(b_{2_i})\cdots (b_{m_i})\right\rceil$$
be a $\tau_\times$-U-$\alpha$-factorization.  We note that $a_{j_i}\in U(R_i)$ for all $i \neq i_0$ and all $1\leq j \leq n$ and $b_{j_i}\in U(R_i)$ for all $i \neq i_0$ and all $1\leq j \leq m$ since they divide the unit $\mu_i$.  Next, every coordinate in the $i_0$ place must be a non-unit in $R_{i_0}$ or else this factor would be a unit in $R$ and therefore could not occur as a factor in a $\tau_\times$-factorization.  This tells us that 
$$a_{i_0}=\lambda_{i_0} a_{1_{i_0}} \cdots a_{n_{i_0}} \left \lceil b_{1_{i_0}} \cdots b_{_{i_0}}\right \rceil$$
is a $\tau_{i_0}$-factorization.  Furthermore, $(b_{k_i})$ is assumed to be $\tau_\times$-$\alpha$ for all $1\leq k \leq m$, and the other coordinates are units, so $b_{k_{i_0}}$ is $\tau_{i_0}$-$\alpha$ for all $1\leq k \leq m$ by Theorem \ref{thm: dir. prod.}.  Again, we need only show that 
$$a_{i_0}= \lambda_{i_0}a_{1_{i_0}}a_{2_{i_0}}\cdots a_{n_{i_0}}\left\lceil b_{1_{i_0}}b_{2_{i_0}}\cdots b_{m_{i_0}}\right\rceil$$ 
is a U-factorization.  Since all the coordinates other than $i_0$ are units, we simply apply Lemma \ref{lem: prod rel} and see that we indeed maintain a U-factorization.
\end{proof}

\begin{theorem}\label{thm: dir. prod. atom}Let $R=R_1 \times \cdots \times R_N$ for $N\in \N$ with $\tau_i$ a symmetric relation on $R_i^{\#}$.  Let $\alpha \in \{$irreducible, strongly irreducible, m-irreducible, very strongly irreducible$\}$.  Then $R$ is $\tau_\times$-U-$\alpha$ if and only if $R_i$ is $\tau_i$-U-$\alpha$ for each $1\leq i \leq N$.
\end{theorem}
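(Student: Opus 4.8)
The plan is to deduce both directions from the coordinatewise dictionary already established, principally Lemma~\ref{lem: dir. prod.} together with the ideal/associate correspondence of Lemma~\ref{lem: prod rel}. For the forward implication, suppose $R$ is $\tau_\times$-U-$\alpha$, fix an index $i_0$, and let $a_{i_0}\in R_{i_0}$ be a non-unit. I would look at the non-unit $a=a_{i_0}^{(i_0)}=(1_{R_1},\ldots,1_{R_{i_0-1}},a_{i_0},1_{R_{i_0+1}},\ldots,1_{R_N})\in R$, use the hypothesis to obtain a $\tau_\times$-U-$\alpha$-factorization of $a$, and then apply Lemma~\ref{lem: dir. prod.}(2) with $\mu_i=1_{R_i}$ for all $i\neq i_0$ to extract a $\tau_{i_0}$-U-$\alpha$-factorization of $a_{i_0}$. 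Since $a_{i_0}$ is an arbitrary non-unit, $R_{i_0}$ is $\tau_{i_0}$-U-$\alpha$; this direction is essentially a direct citation of the lemma.

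For the converse, assume each $R_i$ is $\tau_i$-U-$\alpha$ and let $a=(a_1,\ldots,a_N)$ be a non-unit of $R$. Set $I=\{\,i : a_i\notin U(R_i)\,\}$, a non-empty set. For $i\notin I$ write $a_i=\mu_i\in U(R_i)$; for $i\in I$ use the hypothesis to choose a $\tau_i$-U-$\alpha$-factorization of $a_i$, which by Lemma~\ref{lem: dir. prod.}(1) pushes forward to a $\tau_\times$-U-$\alpha$-factorization of $a_i^{(i)}$ whose inessential divisors all have the shape $d^{(i)}$ and whose essential divisors all have the shape $e^{(i)}$ with $d,e\in R_i^{\#}$, together with a single front unit. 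I would then build a candidate factorization of $a$ by collecting all those front units together with the $\mu_i^{(i)}$ into one unit $\Lambda\in U(R)$, taking as inessential divisors all the $d^{(i)}$ (over all $i\in I$) and as essential divisors all the $e^{(i)}$. That this product equals $a$ is checked one coordinate at a time; that it is a $\tau_\times$-factorization holds because two factors sitting in different coordinates are $\tau_\times$-related for free (each is a unit in the coordinate where the other is supported), while two factors arising from the same index $i$ are $\tau_\times$-related precisely because they were $\tau_i$-related in coordinate $i$; and the essential divisors are $\tau_\times$-$\alpha$ by Lemma~\ref{lem: dir. prod.}(1).

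The step that needs real care, and the one I expect to be the main obstacle, is verifying the two U-factorization conditions for the assembled product. I would argue with coordinatewise ideal computations via Lemma~\ref{lem: prod rel}: if $P$ denotes the product of all the chosen essential divisors, then $(P)=(a)$ because in each coordinate $i\in I$ the essential product generates $(a_i)$ and in each coordinate $i\notin I$ every factor is a unit; multiplying $P$ by an inessential divisor $d^{(i)}$ alters only the $i$-th coordinate, where $(d\cdot P_i)=(P_i)$ by the U-condition inherited from the factorization of $a_i$; and deleting an essential divisor $e^{(i)}$ from $P$ again alters only the $i$-th coordinate, where the generated ideal strictly increases, hence the ideal of the whole product strictly increases. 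The key point making all of this go through is that the factorizations contributed by distinct indices are supported on disjoint coordinates, so no essential divisor coming from $R_i$ can be turned inessential by the factors coming from the other $R_j$'s; once this observation is in hand, the candidate is a genuine $\tau_\times$-U-$\alpha$-factorization of $a$, and $R$ is $\tau_\times$-U-$\alpha$.
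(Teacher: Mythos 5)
Your proposal is correct and follows essentially the same route as the paper: the forward direction embeds $a_{i_0}$ as $a_{i_0}^{(i_0)}$ and extracts a $\tau_{i_0}$-U-$\alpha$-factorization via Lemma~\ref{lem: dir. prod.}(2), and the converse assembles the coordinatewise $\tau_i$-U-$\alpha$-factorizations into one factorization of $(a_i)$, collecting units in front. The only difference is that you verify the U-conditions of the assembled factorization explicitly via coordinatewise ideal computations, where the paper delegates this to Lemma~\ref{lem: dir. prod.}, so your write-up is just a more detailed version of the same argument.
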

\begin{proof} ($\Rightarrow$) Let $a \in R_{i_0}$ be a non-unit.  Then $a^{(i_0)}$ is a non-unit in $R$ and therefore has a $\tau_\times$-U-$\alpha$-factorization.  Furthermore, the only possible non-unit factors in this factorization must occur in the $i_0^\text{th}$ coordinate.  Thus as in Lemma \ref{lem: dir. prod.} (2), we have found a $\tau_{i_0}$-U-$\alpha$-factorization of $a$ by taking the product of the $i_{0}^\text{th}$ entries.  This shows $R_{i_0}$ is $\tau_{i_0}$-U-$\alpha$ as desired.
\\
\indent ($\Leftarrow$) Let $a=(a_i) \in R$ be a non-unit.  For each non-unit $a_i \in R_i$, there is a $\tau_i$-U-$\alpha$-factorization of $a_i$, say 
$$a_i=\lambda_i a_{i_1} \cdots a_{i_{n_i}}\left\lceil b_{i_1} \cdots b_{i_{m_i}} \right \rceil.$$
If $a_i \in U(R_i)$, then $a_i^{(i)}\in U(R)$ and we can simply collect these unit factors in the front, so we need not worry about these factors.  This yields a $\tau_\times$-U-$\alpha$-factorization
$$a=(a_i)=\prod_{i=1}^{n}\lambda_i^{(i)}a_{i_1}^{(i)} \cdots a_{i_{n_i}}^{(i)} \left\lceil \prod_{i=0}^{m} b_{i_1}^{(i)} \cdots b_{i_{m_i}}^{(i)} \right \rceil.$$
It is certainly a $\tau_\times$-factorization.  Furthermore, $b_{j_k}\neq 0_j$ for $1 \leq j \leq m$ and $1 \leq k \leq m_j$, so $b_{j_k}^{(j)}$ is $\tau_\times$-$\alpha$ by Theorem \ref{thm: dir. prod.}.  It is also clear from Lemma \ref{lem: dir. prod.} that this is a U-factorization, showing every non-unit in $R$ has a $\tau_\times$-U-$\alpha$-factorization.
\end{proof}
\begin{theorem} \label{thm: dir prod idf}
Let $R=R_1 \times \cdots \times R_N$ for $N \in \N$ with $\tau_i$ a symmetric relation on $R_i^{\#}$.  Let $\alpha \in \{$irreducible, strongly irreducible, m-irreducible, very strongly irreducible$\}$ and let $\beta \in \{$associate, strongly associate, very strongly associate$\}$.  Then $R$ is a $\tau_\times$-U-$\alpha$-$\beta$-df ring if and only if $R_i$ is $\tau_i$-U-$\alpha$-$\beta$-df ring for each $1\leq i \leq N$.
\end{theorem}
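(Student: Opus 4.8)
\emph{Strategy.} The statement should follow the pattern of Theorems \ref{thm: dir. prod.} and \ref{thm: dir. prod. atom}, with Lemmas \ref{lem: prod rel}, \ref{lem: one nonunit} and \ref{lem: dir. prod.} doing the translation between factorizations in $R$ and in the $R_i$. The argument goes through directly for general $N$, so no induction is needed; one proves each of the two implications by transporting essential $\alpha$-divisors between $R$ and the factor rings and checking that the transport is injective on $\beta$-classes.

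\emph{Forward direction.} Suppose $R$ is a $\tau_\times$-U-$\alpha$-$\beta$-df ring, fix $i_0$, and let $a\in R_{i_0}$ be a non-unit, so $a^{(i_0)}$ is a non-unit of $R$. I would show the assignment $d\mapsto d^{(i_0)}$ sends essential $\tau_{i_0}$-$\alpha$-divisors of $a$ to essential $\tau_\times$-$\alpha$-divisors of $a^{(i_0)}$: if $d$ occurs as an essential divisor in a $\tau_{i_0}$-U-factorization of $a$, then embedding that factorization in the $i_0$th coordinate produces a $\tau_\times$-U-factorization of $a^{(i_0)}$ (the U-conditions transfer coordinate-by-coordinate exactly as in the proof of Lemma \ref{lem: dir. prod.}(1), a computation that does not use the $\alpha$-hypothesis on the essential divisors), in which $d^{(i_0)}$ is essential, and $d^{(i_0)}$ is $\tau_\times$-$\alpha$ by Theorem \ref{thm: dir. prod.} (here $d\neq 0$ since $d\in R_{i_0}^{\#}$). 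By Lemma \ref{lem: prod rel} this assignment is injective up to $\beta$, so finiteness of the essential $\tau_\times$-$\alpha$-divisors of $a^{(i_0)}$ up to $\beta$ forces the same for $a$; hence $R_{i_0}$ is $\tau_{i_0}$-U-$\alpha$-$\beta$-df.

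\emph{Converse.} Suppose each $R_i$ is $\tau_i$-U-$\alpha$-$\beta$-df and let $a=(a_i)\in R$ be a non-unit. Let $(b_i)$ be an essential $\tau_\times$-$\alpha$-divisor of $a$; by Lemma \ref{lem: one nonunit} and Theorem \ref{thm: dir. prod.} it has a unique non-unit coordinate $k$, with $b_k$ being $\tau_k$-$\alpha$, and since $(b_i)\mid (a_i)$ the component $a_k$ is a non-unit of $R_k$. By Lemma \ref{lem: prod rel} (all coordinates of $(b_i)$ being non-zero, so the $\cong$-clause applies when $\beta$ is very strongly associate) the $\beta$-class of $(b_i)$ is determined by $k$ together with the $\beta$-class of $b_k$. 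The crux is the claim that $b_k$ is an essential $\tau_k$-$\alpha$-divisor of $a_k$. To prove it, fix a $\tau_\times$-U-factorization $a=(\lambda_i)(a_{1i})\cdots(a_{ni})\left\lceil(b_{1i})\cdots(b_{mi})\right\rceil$ with $(b_i)=(b_{1i})$, and let $J_k$ be the set of indices $j$ whose essential divisor $(b_{ji})$ has non-unit coordinate $k$ (so $1\in J_k$), noting that $b_{jk}$ is a unit of $R_k$ for $j\notin J_k$. Projecting onto coordinate $k$ and absorbing the units into $\lambda_k$ exhibits $a_k$ as a $\tau_k$-factorization whose non-unit factors are the non-unit $a_{lk}$'s together with $\{b_{jk} : j\in J_k\}$. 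The coordinate-$k$ part of U-condition (1) for each $(a_{li})$ reads $(a_{lk}\prod_j b_{jk})=(\prod_j b_{jk})$, and reading U-condition (2) for $(b_{ji})$, $j\in J_k$, in coordinate $k$ — using that the coordinates $\neq k$ of $(b_{ji})$ are units and so cannot witness the strict containment — gives $(b_{jk}\prod_{j'\in J_k,\,j'\neq j}b_{j'k})\subsetneq(\prod_{j'\in J_k,\,j'\neq j}b_{j'k})$ in $R_k$. Together these say exactly that the above $\tau_k$-factorization of $a_k$, regrouped with the non-unit $a_{lk}$'s inessential and the $b_{jk}$ ($j\in J_k$) essential, is a $\tau_k$-U-factorization; each $b_{jk}$ ($j\in J_k$) is $\tau_k$-$\alpha$ by Theorem \ref{thm: dir. prod.}, so $b_k=b_{1k}$ is an essential $\tau_k$-$\alpha$-divisor of $a_k$. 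Since $R_k$ is $\tau_k$-U-$\alpha$-$\beta$-df there are only finitely many possibilities for $b_k$ up to $\beta$ for each $k$, and summing over $k=1,\dots,N$ bounds the essential $\tau_\times$-$\alpha$-divisors of $a$ up to $\beta$; hence $R$ is $\tau_\times$-U-$\alpha$-$\beta$-df.

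\emph{Expected obstacle.} The delicate step is the claim in the converse that an essential $\tau_\times$-$\alpha$-divisor of $(a_i)$ restricts to an essential $\tau_k$-$\alpha$-divisor of $a_k$: in a ring with zero-divisors the product of principal ideals does not split cleanly across the factor rings, so one must check that the essentiality condition (2) — a priori a statement about a product of ideals over all coordinates — is actually realized in coordinate $k$, which is precisely where the rigidity of Lemma \ref{lem: one nonunit} (a $\tau_\times$-$\alpha$ element has a single non-unit coordinate) is used, and likewise that condition (1) transfers. A further point needing separate attention is the degenerate case $b_k=0$, which can occur only when $a_k=0$ and $\alpha$ is not ``very strongly irreducible'' and where the construction above collapses.
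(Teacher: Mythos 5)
Your overall strategy is the same as the paper's (the paper's own proof is a much terser version of exactly this transport argument), and your forward direction is fine and matches it. The paper simply asserts the step you correctly identify as the crux of the converse --- that an essential $\tau_\times$-$\alpha$-divisor $(b_i)$ of $(a_i)$ with non-unit coordinate $k$ yields $b_k$ as an essential $\tau_k$-$\alpha$-divisor of $a_k$ --- so your attempt to actually prove it is welcome. However, your proof of that claim has a genuine gap. You assert that for every $j\in J_k$ the coordinates $\neq k$ of $(b_{ji})$ are units, so that U-condition (2) for $(b_{ji})$ must be witnessed in coordinate $k$. That is guaranteed by Lemma \ref{lem: one nonunit} only for $j=1$, since only $(b_{1i})=(b_i)$ is assumed $\tau_\times$-$\alpha$; the other essential divisors in the ambient $\tau_\times$-U-factorization may have several non-unit coordinates, and their essentiality in $R$ may be witnessed entirely in a coordinate other than $k$. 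In that case your proposed regrouping of the projected factorization, with all $b_{jk}$ ($j\in J_k$) declared essential, fails condition (2) and is not a $\tau_k$-U-factorization. Concretely, in $\Z_{12}\times\Z_4$ one checks that $(6,0)=\left\lceil (2,1)(3,2)(3,2)\right\rceil$ is a U-factorization with $(2,1)$ irreducible, but the coordinate-$1$ projection $6=2\cdot 3\cdot 3$ does not give $6=\left\lceil 2\cdot 3\cdot 3\right\rceil$ as a U-factorization of $6$ in $\Z_{12}$, since $(3\cdot(2\cdot 3))=(18)=(6)=(2\cdot 3)$, so each $3$ violates condition (2) there.

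The claim itself is still true and your argument is repairable: you only need $b_{1k}$ to survive as essential, not all of $\{b_{jk}:j\in J_k\}$. From condition (2) for $(b_{1i})$ you do get $\bigl(b_{1k}\prod_{j\in J_k,\,j\neq 1}b_{jk}\bigr)\subsetneq\bigl(\prod_{j\in J_k,\,j\neq 1}b_{jk}\bigr)$, and then one applies the rearrangement procedure behind Theorem \ref{thm: rearrange}, discarding inessential factors one at a time, together with the elementary observation that discarding an inessential factor cannot make an essential one inessential: if $(cdP)\subsetneq(dP)$ and $(cdP)=(cP)$, then $(cP)=(P)$ would force $(cdP)=(P)\supseteq(dP)\supseteq(cdP)$, contradicting the strict containment, so $(cP)\subsetneq(P)$. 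With that lemma inserted, your converse goes through. Finally, the degenerate case $b_k=0$ (possible when $a_k=0$, since $0\notin R_k^{\#}$ cannot appear in any $\tau_k$-factorization) is a real loose end that you flag but do not close; the paper's proof does not address it either, so you should not treat it as settled by the cited results.
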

\begin{proof}($\Rightarrow$) Let $a\in R_{i_0}$ be a non-unit.  Suppose there were an infinite number of $\tau_{i_0}$-U-$\alpha$ essential divisors of $a$, say $\{ b_j \}_{j=1}^\infty$ none of which are $\beta$.  But then $\{ b_j^{(i_0)} \}_{j=1}^\infty$ yields an infinite set of $\tau_\times$-U-$\alpha$-divisors of $a^{(i_0)}$ by Lemma \ref{lem: dir. prod.}.  Furthermore, none of them are $\beta$ by Lemma \ref{lem: prod rel}.
\\
\indent ($\Leftarrow$) Let $(a_i) \in R$ be a non-unit.  We look at the collection of $\tau_\times$-U-$\alpha$ essential divisors of $(a_i)$.  Each must be of the form $(\lambda_1, \cdots, b_{i_0}, \cdots \lambda_N)$ with $\lambda_i \in U(R_i)$ for each $i$ and with $b_{i_0}$ $\tau_{i_0}$-$\alpha$ for some $1 \leq i_0 \leq N$.  But then $b_{i_0}$ is a $\tau_{i_0}$-$\alpha$ essential divisor of $a_{i_0}$.  For each $i$ between $1$ and $N$, $R_i$ is a $\tau_i$-U-$\alpha$-$\beta$-df ring, so there can be only finitely many $\tau_{i}$-$\alpha$ essential divisors of $a_{i}$ up to $\beta$, say $N(a_i)$.  If $a_i \in R_i$, then we can simply set $N(a_i)=0$ since it is a unit and has no non-trivial $\tau_i$-U-factorizations.  Hence there can be only 
$$N((a_i)):= N(a_1) + N(a_2) + \cdots + N(a_N)= \sum_{i=1}^{N} N(a_i)$$
$\tau_\times$-$\alpha$ essential divisors of $(a_i)$ up to $\beta$.  This proves the claim.
\end{proof}
\begin{corollary} 
Let $\alpha$ and $\beta$ be as in the theorem.  Let $R=R_1 \times \cdots \times R_N$ for $N \in \N$ with $\tau_i$ a symmetric relation on $R_i^{\#}$.  Then $R$ is a $\tau_\times$-U-$\alpha$ $\tau_\times$-U-$\alpha$-$\beta$-df ring if and only if $R_i$ is $\tau_i$-U-$\alpha$ $\tau_i$-U-$\alpha$-$\beta$-df ring for each $1\leq i \leq N$.
\end{corollary}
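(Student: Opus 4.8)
The plan is to recognize that the property under discussion is simply the conjunction of two properties that have each already been characterized for direct products in the two immediately preceding results. A ``$\tau_\times$-U-$\alpha$ $\tau_\times$-U-$\alpha$-$\beta$-df ring'' means a ring that is both $\tau_\times$-U-$\alpha$ and a $\tau_\times$-U-$\alpha$-$\beta$-df ring, in exact analogy with the notion of an atomic idf-ring from the introduction. So I would prove the corollary by conjoining the biconditional of Theorem~\ref{thm: dir. prod. atom} with the biconditional of Theorem~\ref{thm: dir prod idf}.

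Concretely, for the forward implication I would assume $R$ is $\tau_\times$-U-$\alpha$ and a $\tau_\times$-U-$\alpha$-$\beta$-df ring. Theorem~\ref{thm: dir. prod. atom} applied to the first hypothesis yields that $R_i$ is $\tau_i$-U-$\alpha$ for every $1 \leq i \leq N$, and Theorem~\ref{thm: dir prod idf} applied to the second yields that $R_i$ is a $\tau_i$-U-$\alpha$-$\beta$-df ring for every $i$; combining these two statements coordinatewise gives precisely that each $R_i$ is a $\tau_i$-U-$\alpha$ $\tau_i$-U-$\alpha$-$\beta$-df ring. The converse is the same argument read from right to left: assuming each $R_i$ has both properties, the ``$\Leftarrow$'' halves of Theorems~\ref{thm: dir. prod. atom} and~\ref{thm: dir prod idf} deliver both properties for $R$.

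The only point that genuinely needs checking is that the two halves do not interact, i.e.\ that the ``df'' equivalence does not covertly require atomicity as a hypothesis and vice versa. Inspecting the statements of Theorems~\ref{thm: dir. prod. atom} and~\ref{thm: dir prod idf}, each biconditional holds under the sole standing assumption that the $\tau_i$ are symmetric, with no atomicity or finiteness prerequisite feeding from one into the other, so the conjunction splits cleanly across the product and there is no real obstacle. The corollary is thus a bookkeeping consequence of the two preceding theorems, and the proof is essentially ``combine Theorem~\ref{thm: dir. prod. atom} and Theorem~\ref{thm: dir prod idf}.''
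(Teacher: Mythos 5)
Your proposal is correct and matches the paper's argument: the paper also deduces the corollary immediately by combining Theorem \ref{thm: dir. prod. atom} with Theorem \ref{thm: dir prod idf}, exactly as you do. Your extra check that the two biconditionals hold independently under only the symmetry assumption is a reasonable (if brief) justification of the word ``immediate.''
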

\begin{proof} This is immediate from Theorem \ref{thm: dir prod idf} and Theorem \ref{thm: dir. prod. atom}.
\end{proof}
\begin{theorem} Let $R=R_1 \times \cdots \times R_N$ for $N \in \N$ with $\tau_i$ a symmetric relation on $R_i^{\#}$.  Then $R$ is a $\tau_\times$-U-BFR if and only if $R_i$ is a $\tau_i$-U-BFR for every $i$.
\end{theorem}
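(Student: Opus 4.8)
\noindent\emph{Proof plan.} The two directions split along familiar lines: the forward direction is a lifting argument, the converse a coordinatewise bound on the number of essential divisors.

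For ($\Rightarrow$) the plan is to reuse Lemma~\ref{lem: dir. prod.}(1): its proof invokes the hypothesis ``$\tau_i$-$\alpha$'' only to check that the lifted essential divisors are $\tau_\times$-$\alpha$, while the verification of the two U-factorization conditions is independent of $\alpha$. So the same computation shows that any $\tau_i$-U-factorization of a non-unit $a\in R_i$ with $m$ essential divisors lifts to a $\tau_\times$-U-factorization of $a^{(i)}$ with exactly $m$ essential divisors. Assuming $R$ is a $\tau_\times$-U-BFR, fix $i_0$ and a non-unit $a\in R_{i_0}$; then $a^{(i_0)}$ is a non-unit of $R$, hence admits a bound $N$ on the number of essential divisors among its $\tau_\times$-U-factorizations, and by the lifting observation this $N$ also bounds the number of essential divisors in any $\tau_{i_0}$-U-factorization of $a$. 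Thus $R_{i_0}$ is a $\tau_{i_0}$-U-BFR.

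For ($\Leftarrow$) assume each $R_i$ is a $\tau_i$-U-BFR and fix a non-unit $(a_i)\in R$. Let $N_i$ be a bound on the number of essential divisors occurring in the $\tau_i$-U-factorizations of $a_i$ (take $N_i=0$ when $a_i$ is a unit); the plan is to show $\sum_{i=1}^N N_i$ bounds the number of essential divisors in every $\tau_\times$-U-factorization of $(a_i)$. Given $(a_i)=(\lambda_i)\prod_j(c_{j_i})\bigl\lceil\prod_{k=1}^{m}(d_{k_i})\bigr\rceil$, I would first use condition (2) of a U-factorization together with the fact that containments of principal ideals in $R=\prod_i R_i$ are detected coordinatewise: for each essential divisor $(d_{k_i})$ there is a coordinate $\rho(k)$ in which deleting the $\rho(k)$-th entry of $(d_{k_i})$ from the product of the (projected) essential divisors strictly enlarges the ideal of $R_{\rho(k)}$; this gives $\sum_i|\rho^{-1}(i)|=m$, and, by the standard observation that a factor which fails to shrink the running ideal at its position is redundant in the whole product, each $d_{k_{\rho(k)}}$ is a ``strict descender'' of the chain $R_{\rho(k)}\supseteq(d_{1_{\rho(k)}})\supseteq(d_{1_{\rho(k)}}d_{2_{\rho(k)}})\supseteq\cdots$. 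It then remains, for each fixed $i$, to bound by $N_i$ the number $t_i$ of strict descenders of the analogous chain in $R_i$; one uses that $\tau_\times$ restricts to $\tau_i$ on pairs of non-units, that the partial products of the strict descenders generate precisely the successive ideals of a strictly descending chain whose bottom term is $(a_i)$, and Theorem~\ref{thm: rearrange} to pass to an honest $\tau_i$-U-factorization, applying the $\tau_i$-U-BFR of $R_i$.

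The hard part will be exactly this last step, $t_i\le N_i$: a rearrangement of the product of strict descenders into a $\tau_i$-U-factorization need not retain all $t_i$ of them as essential divisors, so one must instead argue that the length of a strictly descending chain of principal ideals of $R_i$ obtained from a $\tau_i$-factorization of (an associate of) $a_i$ is itself controlled by the $\tau_i$-U-BFR bound; this also forces one to take $N_i$ uniformly over the $\sim$-class of $a_i$ (automatic when $R_i$ is strongly associate, and otherwise to be built into the choice of $N_i$). A secondary nuisance is that a coordinate of an essential or inessential divisor may be a unit of $R_i$, or even $0$, so the projected data must be trimmed before the machinery of $R_i$ applies; this is routine but needs to be carried out carefully.
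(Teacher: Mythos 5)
Your overall strategy is the same as the paper's: the forward direction lifts a $\tau_{i_0}$-U-factorization of $a$ to a $\tau_\times$-U-factorization of $a^{(i_0)}$ with the same number of essential divisors (correct, and as you note the U-conditions transfer coordinatewise with no role for $\alpha$), and the converse assigns each essential divisor of a $\tau_\times$-U-factorization to a coordinate where it ``stays essential'' and bounds coordinatewise; your bound $\sum_i N_i$ is even slightly sharper than the paper's $B(a)\cdot N$.

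The gap is the step you flag, and your proposed repair is the wrong one. First, you do not need to bound the number $t_i$ of strict descenders of the chain $R_i\supseteq(d_{1_i})\supseteq(d_{1_i}d_{2_i})\supseteq\cdots$; that quantity genuinely can exceed the number of deletion-essential factors (in $\Z_{12}$ the factorization $6=2\cdot3\cdot3$ has two strict descents but only the factor $2$ enlarges the ideal when deleted), and controlling it by the U-BFR bound is not obviously possible. You only need to bound $|\rho^{-1}(i)|$, where $\rho(k)$ is a coordinate witnessing that $(d_{1_{\rho(k)}}\cdots d_{m_{\rho(k)}})\subsetneq(d_{1_{\rho(k)}}\cdots \widehat{d_{k_{\rho(k)}}}\cdots d_{m_{\rho(k)}})$, since $m=\sum_i|\rho^{-1}(i)|$. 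Second, your associate worry can be dissolved, but not by passing to a $\sim$-class bound: instead, use Theorem \ref{thm: rearrange} to U-rearrange the list $d_{1_i},\ldots,d_{m_i}$ (non-unit coordinates only) \emph{relative to its own product}, obtaining an essential block $T'$ with $(\prod_{k\in T'}d_{k_i})=(d_{1_i}\cdots d_{m_i})=(a_i)$. Every $k_0\in\rho^{-1}(i)$ must lie in $T'$: otherwise $(a_i)=(\prod_{k\in T'}d_{k_i})\supseteq(\prod_{k\neq k_0}d_{k_i})\supseteq(d_{1_i}\cdots d_{m_i})=(a_i)$ forces $(\prod_{k\neq k_0}d_{k_i})=(a_i)$, contradicting the choice of $\rho(k_0)$. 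Now prepend the non-unit coordinates $c_{j_i}$ of the inessential divisors and the discarded $d_{k_i}$, $k\notin T'$, as inessential divisors: condition (1) of the original U-factorization coordinatizes to $c_{j_i}(d_{1_i}\cdots d_{m_i})=(d_{1_i}\cdots d_{m_i})$, and $(\prod_{k\in T'}d_{k_i})$ equals that ideal, so all these factors fix the essential product. This yields an honest $\tau_i$-U-factorization of $a_i$ itself (the identity $a_i=\lambda_i\prod_j c_{j_i}\prod_k d_{k_i}$ holds on the nose) with at least $|\rho^{-1}(i)|$ essential divisors, whence $|\rho^{-1}(i)|\le N_i$ and $m\le\sum_i N_i$. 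With this substitution your argument closes and coincides with the paper's.
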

\begin{proof} ($\Rightarrow$) Let $a\in R_{i_0}$ be a non-unit.  Then $a^{(i_0)}$ is a non-unit in $R$, and hence has a bound on the number of essential divisors in any $\tau_\times$-U-factorization, say $N_e(a^{(i_0)})$.  We claim this also bounds the number of essential divisors in any $\tau_{i_0}$-U-factorization of $a$.  Suppose for a moment $a=a_1 \cdots a_n \left \lceil b_1 \cdots b_m \right\rceil$ were a $\tau_{i_0}$-U-factorization with $m> N_e(a^{(i)})$.  But then
$$a=\lambda^{(i_0)}a_1^{(i_0)} \cdots a_n^{(i_0)} \left \lceil b_1^{(i_0)} \cdots b_m^{(i_0)}\right \rceil $$
is a $\tau_\times$-U-factorization with more essential divisors than is allowed, a contradiction.
\\
\indent $(\Leftarrow)$ Let $a=(a_i)\in R$ be a non-unit.  Let $B(a)=\text{max}\{N_e(a_i)\}_{i=1}^{N}$.  Where $N_e(a_i)$ is the number of essential divisors in any $\tau_i$-U-factorization of $a_i$, and will say for $a_i \in U(R_i)$, $N_e(a_i)=0$.  We claim that $B(a)N$ is a bound on the number of essential divisors in any $\tau_\times$-U-factorization of $a$.  Let 
$$(a_i)=(\lambda_i) (a_{1_i})\cdots (a_{n_i}) \left \lceil (b_{1_i})\cdots (b_{m_i}) \right \rceil$$
be a $\tau_\times$-U-factorization.  We can decompose this factorization so that each factor has at most one non-unit entry as follows:
$$(a_i)=\prod_{i=1}^{N}\lambda_i^{(i)}a_{1_i}^{(i)} \cdots \prod_{i=1}^{N}a_{n_i}^{(i)} \prod_{i=1}^{N}b_{1_i}^{(i)} \cdots \prod_{i=1}^{N}b_{m_i}^{(i)}.$$
\indent Some of these factors may indeed be units; however, by allowing a unit factor in the front of every $\tau$-U-factorization, we simply combine all the units into one at the front, and maintain a $\tau_\times$-factorization.  We can always rearrange this to be a $\tau_\times$-U-factorization.  Furthermore, since $a_{j_i}$ is inessential, by Lemma \ref{lem: prod rel} $a_{j_i}^{(i)}$ is inessential.  Only some of the components of the essential divisors could become inessential, for instance if one coordinate were a unit.  At worst when we decompose, $b_{j_i}^{(i)}$ remains an essential divisor for all $1\leq j \leq m$ and for all $1\leq i \leq N$.  But then the product of each of the $i^{\text{th}}$ coordinates gives a $\tau_i$-U-factorization of $a_i$ and thus is bounded by $N_e(a_i)$, so we have $m\leq N_e(a_i) \leq B(a)$ and therefore there are no more than $B(a)N$ essential divisors.  Certainly the original factorization is no longer than the one we constructed through the decomposition, proving the claim and completing the proof.
\end{proof}

\begin{theorem} \label{thm: dir. prod. HFR}Let $R=R_1 \times \cdots \times R_N$ for $N \in \N$ with $\tau_i$ a symmetric relation on $R_i^{\#}$.  Let $\alpha \in \{$irreducible, strongly irreducible, m-irreducible, very strongly irreducible $\}$.  Then $R$ is $\tau_\times$-U-$\alpha$-HFR if and only if $R_i$ is a $\tau_i$-U-$\alpha$-HFR for each $i$.
\end{theorem}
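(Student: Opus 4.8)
The plan is to handle the two directions separately, in each case first using Theorem~\ref{thm: dir. prod. atom} to reduce the ``$\tau_\times$-U-$\alpha$'' half of the definition to the components, and then comparing numbers of essential divisors. Since Theorems~\ref{thm: dir. prod.} and Lemmas~\ref{lem: dir. prod.}, \ref{lem: one nonunit}, \ref{lem: prod rel} all treat the four choices of $\alpha$ uniformly, no case distinction on $\alpha$ is needed.

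\emph{The forward direction.} Suppose $R$ is a $\tau_\times$-U-$\alpha$-HFR. Since $R$ is then $\tau_\times$-U-$\alpha$, Theorem~\ref{thm: dir. prod. atom} gives that each $R_i$ is $\tau_i$-U-$\alpha$. Fix $i_0$ and a non-unit $a\in R_{i_0}$, and let $a=\lambda a_1\cdots a_n\lceil b_1\cdots b_m\rceil$ and $a=\lambda' a'_1\cdots a'_{n'}\lceil b'_1\cdots b'_{m'}\rceil$ be two $\tau_{i_0}$-U-$\alpha$-factorizations. Passing through the embedding $x\mapsto x^{(i_0)}$, Lemma~\ref{lem: dir. prod.}(1) turns each of these into a $\tau_\times$-U-$\alpha$-factorization of the non-unit $a^{(i_0)}\in R$, one with $m$ essential divisors and one with $m'$. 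As $R$ is a $\tau_\times$-U-$\alpha$-HFR we get $m=m'$, so $R_{i_0}$ is a $\tau_{i_0}$-U-$\alpha$-HFR.

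\emph{The reverse direction.} Assume each $R_i$ is a $\tau_i$-U-$\alpha$-HFR; then each $R_i$ is $\tau_i$-U-$\alpha$, so $R$ is $\tau_\times$-U-$\alpha$ by Theorem~\ref{thm: dir. prod. atom}. Let $(a_i)\in R$ be a non-unit and fix any $\tau_\times$-U-$\alpha$-factorization $(a_i)=(\lambda_i)(a_{1_i})\cdots(a_{n_i})\lceil(b_{1_i})\cdots(b_{m_i})\rceil$. Each essential divisor $(b_{k_i})$ is $\tau_\times$-$\alpha$, so by Lemma~\ref{lem: one nonunit} it has exactly one non-unit coordinate $c(k)\in\{1,\dots,N\}$ and is a unit in every other coordinate. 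For a fixed $j$, multiplying the $j$th coordinates of all the factors recovers $a_j$; after moving the resulting units to the front (our standing convention for $\tau_\times$), the non-unit factors that survive are precisely the $b_{k_j}$ with $c(k)=j$, together with whichever of the $a_{k_j}$ are non-units. This is a $\tau_j$-factorization, since two non-units appearing in coordinate $j$ are $\tau_j$-related by the definition of $\tau_\times$, and each such $b_{k_j}$ is $\tau_j$-$\alpha$ by Theorem~\ref{thm: dir. prod.}.

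The main obstacle is checking that this really is a U-factorization of $a_j$ with the $b_{k_j}$ ($c(k)=j$) as essential divisors. Using $(b_{1_j}\cdots b_{m_j})=(\prod_{c(k)=j}b_{k_j})$ (the omitted factors are units), the inessential condition $(a_{k_j})(\prod_{c(l)=j}b_{l_j})=(\prod_{c(l)=j}b_{l_j})$ drops out of the corresponding identity in $R$ by Lemma~\ref{lem: prod rel}; and for the essential condition, the genuine inequality $(b_{k_i})(b_{1_i}\cdots\widehat{b_{k_i}}\cdots b_{m_i})\neq(b_{1_i}\cdots\widehat{b_{k_i}}\cdots b_{m_i})$ in $R$ must, by Lemma~\ref{lem: prod rel}, fail in some coordinate, and it cannot fail in a coordinate $i\neq c(k)$ since $(b_{k_i})$ is a unit there, so it fails in coordinate $c(k)$, which is exactly the essential condition for $b_{k_j}$ when $c(k)=j$. (When $a_j$ is a unit, $(a_j)=(b_{1_j}\cdots b_{m_j})$ forces every $b_{k_j}$ to be a unit, so no essential divisor has $c(k)=j$.) Once this is in place, the $\tau_j$-U-$\alpha$-HFR hypothesis says $|\{k:c(k)=j\}|$ depends only on $a_j$; call it $N_j(a_j)$, with $N_j(a_j)=0$ when $a_j$ is a unit. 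Summing over $j$ gives $m=\sum_{j=1}^{N}N_j(a_j)$, independent of the chosen factorization, so $R$ is a $\tau_\times$-U-$\alpha$-HFR, completing the argument.
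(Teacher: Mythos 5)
Your proposal is correct and follows essentially the same route as the paper: the forward direction embeds two $\tau_{i_0}$-U-$\alpha$-factorizations of $a$ via $x\mapsto x^{(i_0)}$ using Lemma~\ref{lem: dir. prod.}, and the reverse direction groups the essential divisors of a $\tau_\times$-U-$\alpha$-factorization by their unique non-unit coordinate (Lemma~\ref{lem: one nonunit}) and invokes the HFR property of each $R_j$ coordinatewise. The only difference is that you verify the coordinatewise essential/inessential conditions explicitly, which the paper leaves implicit.
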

\begin{proof} ($\Rightarrow$) Let $a\in R_{i_0}$ be a non-unit.  We know by Theorem \ref{thm: dir. prod. atom} Then $a^{(i_0)}$ is a non-unit in $R$, and has an $\tau_\times$-U-$\alpha$-factorization.  Suppose there were $\tau_{i_0}$-U-$\alpha$-factorizations of $a$ with different numbers of essential divisors, say:
$$a=\lambda a_1 \cdots a_n \left\lceil b_1 \cdots b_m \right\rceil=\mu c_1 \cdots c_{n'} \left\lceil d_1 \cdots d_{m'} \right\rceil$$
where $m\neq m'$.
By Lemma \ref{lem: dir. prod.} this yields two $\tau_\times$-U-$\alpha$-factorizations:
$$a^{(i_0)}=\lambda^{(i_0)}a_1^{(i_0)} \cdots a_n^{(i_0)} \left\lceil b_1^{(i_0)} \cdots b_n^{(i_0)} \right\rceil=\mu^{(i_0)}c_1^{(i_0)} \cdots c_{n'}^{(i_0)} \left\lceil d_1^{(i_0)} \cdots d_{n'}^{(i_0)} \right\rceil.$$
This contradicts the hypothesis that $R$ is a $R$ is $\tau_\times$-U-$\alpha$-HFR.
\\
\indent ($\Leftarrow$) Let $(a_i)\in R$ be a non-unit.  Suppose we had two $\tau_\times$-U-$\alpha$ factorizations
$$(a_i)=(\lambda_i)(a_{1_i})(a_{2_i})\cdots (a_{n_i})\left\lceil (b_{1_i})(b_{2_i})\cdots (b_{m_i})\right\rceil=(\mu_i)(a'_{1_i})(a'_{2_i})\cdots (a'_{n'_i})\left\lceil (b'_{1_i})(b'_{2_i})\cdots (b'_{m'_i})\right\rceil.$$
For each $i_0$, if $a_{i_0}$ is a non-unit in $R_{i_0}$, then since each $\tau_\times$-$\alpha$ element can only have one coordinate which is not a unit, we can simply collect all the $\tau_\times$-divisors which have the $i_0$ coordinate a non-unit.  This product forms a $\tau_{i_0}$-U-$\alpha$-factorization of $a_{i_0}$ and therefore the number of essential $\tau_\times$-factors with coordinate $i_0$ a non-unit must be the same in the two factorizations.  This is true for each coordinate $i_0$, hence $m=m'$ as desired.
\end{proof}

\begin{theorem} Let $R=R_1 \times \cdots \times R_N$ for $N \in \N$ with $\tau_i$ a symmetric relation on $R_i^{\#}$.  Let $\alpha \in \{$irreducible, strongly irreducible, m-irreducible, very strongly irreducible$\}$ and let $\beta \in \{$associate, strongly associate $\}$.  Then $R$ is $\tau_\times$-U-$\alpha$-$\beta$-UFR if and only if $R_i$ is a $\tau_i$-U-$\alpha$-$\beta$-UFR for each $i$.
\end{theorem}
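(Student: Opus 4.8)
The plan is to reuse the direct-product machinery already in place, arguing one coordinate at a time. Recall that a $\tau$-U-$\alpha$-$\beta$-UFR is by definition a $\tau$-U-$\alpha$-HFR in which, for every non-unit, the essential divisors of any two $\tau$-U-$\alpha$-factorizations can be rearranged to correspond up to $\beta$. Since a UFR is in particular an HFR, Theorem \ref{thm: dir. prod. HFR} already handles the HFR part in both directions: $R$ is $\tau_\times$-U-$\alpha$-HFR if and only if each $R_i$ is $\tau_i$-U-$\alpha$-HFR. Thus in each direction the only thing left to transfer is the uniqueness-up-to-$\beta$ clause. The point that makes this clean is the hypothesis $\beta \in \{$associate, strongly associate$\}$: by Lemma \ref{lem: prod rel}, for elements concentrated in a single coordinate (all other coordinates units) one has $b^{(i_0)} \sim (b')^{(i_0)} \iff b \sim b'$ and $b^{(i_0)} \approx (b')^{(i_0)} \iff b \approx b'$, so a $\beta$-correspondence in $R$ and a $\beta$-correspondence in the coordinate are interchangeable.

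For ($\Rightarrow$), suppose $R$ is a $\tau_\times$-U-$\alpha$-$\beta$-UFR; by the above $R_{i_0}$ is already $\tau_{i_0}$-U-$\alpha$-HFR for each $i_0$. Fix a non-unit $a \in R_{i_0}$ together with two $\tau_{i_0}$-U-$\alpha$-factorizations of $a$. Applying the embedding $x \mapsto x^{(i_0)}$ and Lemma \ref{lem: dir. prod.}(1) yields two $\tau_\times$-U-$\alpha$-factorizations of $a^{(i_0)}$; by hypothesis their essential divisors can be rearranged to correspond up to $\beta$ in $R$, and by Lemma \ref{lem: prod rel} reading off $i_0$-coordinates turns this into a $\beta$-correspondence of the original essential divisors in $R_{i_0}$. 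Hence $R_{i_0}$ is a $\tau_{i_0}$-U-$\alpha$-$\beta$-UFR.

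For ($\Leftarrow$), suppose every $R_i$ is a $\tau_i$-U-$\alpha$-$\beta$-UFR; Theorem \ref{thm: dir. prod. HFR} gives that $R$ is $\tau_\times$-U-$\alpha$-HFR. Take a non-unit $(a_i) \in R$ with two $\tau_\times$-U-$\alpha$-factorizations. By Lemma \ref{lem: one nonunit} and Theorem \ref{thm: dir. prod.}, each essential divisor of either factorization has exactly one non-unit coordinate; as in the proof of Theorem \ref{thm: dir. prod. HFR}, for each coordinate $i_0$ with $a_{i_0} \notin U(R_{i_0})$ we collect the essential divisors whose non-unit coordinate is $i_0$ and read off $i_0$-coordinates to obtain a $\tau_{i_0}$-U-$\alpha$-factorization of $a_{i_0}$ --- one from each of the two given factorizations. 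By hypothesis these two have the same number of essential divisors, with a rearrangement matching them up to $\beta$ in $R_{i_0}$. Assembling these rearrangements over all non-unit coordinates $i_0$ produces a rearrangement of the essential divisors of the two original $\tau_\times$-factorizations, and Lemma \ref{lem: prod rel} promotes each coordinate-wise $\beta$-match to a $\beta$-match in $R$. Therefore $R$ is a $\tau_\times$-U-$\alpha$-$\beta$-UFR.

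The substantive work --- showing that the coordinate-by-coordinate decomposition of a $\tau_\times$-U-factorization is again a $\tau_\times$-U-factorization, that the essential divisors sort according to their unique non-unit coordinate, and that essentiality survives the decomposition --- is exactly the bookkeeping already carried out in the proof of Theorem \ref{thm: dir. prod. HFR} (and of the $\tau_\times$-U-BFR result), so I would cite rather than repeat it. The only genuinely delicate issue is the restriction on $\beta$: Lemma \ref{lem: prod rel} fails to give a biconditional for the very-strongly-associate relation, and the argument does not go through for that case without additional non-vanishing hypotheses, which is presumably why the statement excludes it.
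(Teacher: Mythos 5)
Your proposal is correct and follows essentially the same route as the paper, whose entire proof is the one-line remark that one applies Lemma \ref{lem: prod rel} to the argument of Theorem \ref{thm: dir. prod. HFR} so that the matched essential divisors correspond up to the appropriate associate relation. Your write-up simply makes explicit the bookkeeping the paper leaves implicit (including the correct observation that the exclusion of very strong associates stems from the failure of the biconditional in Lemma \ref{lem: prod rel} for $\cong$), so no further changes are needed.
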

\begin{proof}We simply apply Lemma \ref{lem: prod rel} to the proof of Theorem \ref{thm: dir. prod. HFR}, to see that the factors can always be rearranged to match associates of the correct type.
\end{proof}


\begin{thebibliography}{9}
\bibitem{Stickles} D. D. Anderson, M. Axtell, S.J. Forman, and J. Stickles, \emph{When are associates unit multiples?}, Rocky Mountain J. Math. \textbf{34:3} (2004), 811--828.

\bibitem{anderson90} D.D. Anderson, D.F. Anderson, and M. Zafrullah, \emph{Factorization in integral domains}, J. Pure Appl. Algebra \textbf{69} (1990), 1--19.

\bibitem{Chun} D.D. Anderson and S. Chun, \emph{Irreducible elements in commutative rings with zero-divisors}, Rocky Mountain J. Math. \textbf{37:3} (2011), 741--744.

\bibitem{Frazier} D. D. Anderson and Andrea M. Frazier, \emph{On a general theory of factorization in integral domains}, Rocky Mountain J. Math. \textbf{41:3} (2011), 663--705.
 
\bibitem{Andersonzdg} D.D. Anderson and M. Naseer, \emph{Beck's coloring of a commutative ring}, J. Algebra \textbf{159:2} (1993), 500--514.
    
\bibitem{Valdezleon} D.D. Anderson and S. Valdes-Leon, \emph{Factorization in commutative rings with zero divisors}, Rocky Mountain J. of Math. \textbf{26:2} (1996), 439--480.

\bibitem{Agargun} A.G. A\v{g}arg\"{u}n and D.D. Anderson and S. Valdez-Leon, \emph{Unique factorization in commutative rings with zero divisors}, Comm. Algebra \textbf{27:4} (1999), 1967--1974.

\bibitem{Valdezleon3} A.G. A\v{g}arg\"{u}n, D.D. Anderson and S. Valdes-Leon, \emph{Factorization in commutative rings with zero divisors, {III}}, Rocky Mountain J. of Math. \textbf{31:1} (2001), 1--21.

\bibitem{Axtell} M. Axtell, \emph{U-factorizations in commutative rings with zero-divisors}, Comm. Algebra \textbf{30:3} (2002), 1241--1255.

\bibitem{Axtell2}  M. Axtell, S. Forman, N. Roersma and J. Stickles, \emph{Properties of U-factorizations}, International Journal of Commutative Rings \textbf{2:2} (2003), 83--99.

\bibitem{Fletcher} C.R. Fletcher, \emph{Unique factorization rings}, Proc. Cambridge Philos. Soc. \textbf{65} (1969), 579--583.

\bibitem{Fletcher2} C.R. Fletcher, \emph{The structure of unique factorization rings}, Proc. Cambridge Philos. Soc. \textbf{67} (1970), 535--540.

\bibitem{Mooney} C.P. Mooney, \emph{Generalized factorization in commutative rings with zero-divisors}, Houston J. of Math. To appear. 

\end{thebibliography}
\end{document}